\newcommand{\R}{\mathbb{R}}
\newcommand{\N}{\mathbb{N}}
\newcommand{\F}{\mathcal{F}}
\newcommand{\Sc}{\mathcal{S}}
\newcommand{\Exp}{\mathbb{E}}
\newcommand{\oo}{\mathcal{O}}
\newcommand{\K}{\mathcal{K}}
\newcommand{\inpr}[3][]{\left\langle#2 \,,\, #3\right\rangle_{#1}}
\newcommand{\indicator}[1]{\mathbbm{1}_{#1}}
\numberwithin{equation}{section}
\newtheorem{theorem}{Theorem}[section]
\newtheorem{lemma}[theorem]{Lemma}
\newtheorem{proposition}[theorem]{Proposition}
\newtheorem{remark}[theorem]{Remark}
\newtheorem{definition}[theorem]{Definition}
\newtheorem{example}[theorem]{Example}
\title[Stochastic PDEs]{Stochastic PDEs in $\mathcal{S}^\prime$ for SDEs driven by L\'evy noise}
\author{Suprio Bhar}
\address{Suprio Bhar, Tata Institute of Fundamental Research, Centre For Applicable Mathematics, Post Bag No 6503, GKVK Post Office, Sharada Nagar, Chikkabommsandra, Bangalore 560065, India.}
\email{suprio@tifrbng.res.in, suprio.bhar@gmail.com}
\author{Rajeev Bhaskaran}
\address{Rajeev Bhaskaran, Indian Statistical Institute Bangalore Centre, 8th Mile Mysore Road, Bangalore 560059, India.}
\email{brajeev@isibang.ac.in}
\author{Barun Sarkar}
\address{Barun Sarkar, Indian Statistical Institute Bangalore Centre, 8th Mile Mysore Road, Bangalore 560059, India.}
\email{barunsarkar.math@gmail.com}
\begin{document}

\begin{abstract}
In this article we show that a finite dimensional stochastic differential equation driven by a
L\'evy process can be formulated as a stochastic partial differential
equation. We prove the existence and uniqueness of strong solutions of such stochastic PDEs. The solutions that we construct have the `translation invariance' property. The special case of this
correspondence for diffusion processes was proved in [Rajeev, \emph{Translation invariant diffusion in the space of
tempered distributions}, Indian J. Pure Appl. Math. \textbf{44} (2013), no.~2,
  231--258].
\end{abstract}
\keywords{$\mathcal{S}^\prime$ valued process, L\'{e}vy processes, Hermite-Sobolev space, Strong solution, Monotonicity inequality, Translation invariance}
\subjclass[2010]{60G51, 60H10, 60H15}

\maketitle

\section{Introduction}\label{intro}
In this article we show that a finite dimensional stochastic differential equation (abbrev. SDE) driven by a
L\'evy process can be formulated as a stochastic partial differential
equation (abbrev. SPDE). The goal of this article is to prove the existence and uniqueness of strong solutions of such SPDEs.

Given a Brownian motion $\{B_t\}$ and an independent Poisson random measure $N$ driven by a L\'evy measure $\nu$, we consider an SDE in $\R^d$, of the form
\begin{equation}\label{sde}
\begin{split}
U_{t} &= \kappa + \int_0^t\bar b(U_{s-};\xi)ds + \int_0^t \bar\sigma(U_{s-};\xi)\cdot dB_s + \int_0^t\int_{(0 < |x| < 1)}  \bar F(U_{s-},x;\xi)\, \widetilde
N(dsdx)\\
&+ \int_0^t \int_{(|x| \geq  1)} \bar G(U_{s-},x;\xi) \,
N(dsdx), \quad t\geq 0,
\end{split}
\end{equation}
and the corresponding SPDE in the space of tempered distributions $\Sc^\prime$, (more specifically, in a Hermite-Sobolev space $\Sc_{-p}$) viz.
\begin{equation}\label{spde}
\begin{split}
Y_t
&= \xi + \int_0^t
A(Y_{s-})\cdot dB_s + \int_0^t \widetilde L(Y_{s-})\, ds\\
&+ \int_0^t \int_{(0 < |x| < 1)} \left(\tau_{F(Y_{s-},x)}
-Id\right) \,
Y_{s-}\,\widetilde N(dsdx) + \int_0^t \int_{(|x| \geq  1)} \left(\tau_{G(Y_{s-},x)}
-Id\right) \,Y_{s-}\,
N(dsdx),
\end{split}
\end{equation}
where $\xi$ is an $\Sc_{-p}$ valued $\F_0$-measurable random variable and $\kappa$ is an $\R^d$ valued $\F_0$-measurable random variable (see Sections \ref{sec:1}, \ref{sec:2} and \ref{sec:3} for notations).

The study of such correspondence was initiated in
\cite{MR1837298},\cite{MR3063763} for diffusion processes with a
deterministic initial condition. In \cite{MR3687773}, this was
extended to random initial conditions, which require some technical
conditions on the coefficients of the diffusion processes. For
diffusion processes this correspondence together with the pathwise
uniqueness of \eqref{spde} actually leads to strong solutions of \eqref{sde} when
the diffusion and drift coefficients are `rough' (see \cite[Proposition 4.1]{MR3517629}). The question of when the solutions of an SPDE
can be realised on finite dimensional submanifolds has also been

studied recently in the context of the HJM model in finance (see
\cite{MR1849252, MR1822777, MR3227060}). A key feature of our
correspondence and of independent interest is that the diffusion,
drift and jump coefficients viz. $\bar\sigma, \bar b, \bar F, \bar
G$ for the finite dimensional SDE can be written as a convolution
involving the initial condition $\xi$ of the SPDE (see \cite[Remark
3.7]{MR3063763}). The correspondence between SPDE's and SDE's also
extends to the flows generated by the SDE's (see \cite{MR2373102,
mild-soln}).

In this paper we show the local existence and uniqueness of strong
solutions to the SPDE (Theorems \ref{uniq-via-monotonicity} and \ref{complt-eqn}).
The existence is shown by an explicit construction of the solution
$\{Y_t\}$ of the SPDE
as a translate of the initial condition $\xi$ by the solutions $\{U_t\}$ of the finite dimensional SDE i.e. $Y_t = \tau_{U_t}(\xi)$. Here $\tau_x : \mathbb R^d \rightarrow \mathbb R^d, x \in \R^d$ denote the translation operators. This requires a
generalisation of the It\^o formula in \cite{MR1837298} for continuous semi-martingales with a non-random initial condition to semi-martingales
with jumps and in particular to L\'evy processes. This was done in \cite{MR3647067}, where an existence theorem for the SPDE \eqref{spde} was also proved for a sub class of SDE's than those considered here (\cite[Theorem 4.7]{MR3647067},
\cite{JOTP-erratum}). See also \cite{MR664333} for a related It\^o formula.

The uniqueness result uses the technique of the `Monotonicity inequality' (see \cite{MR3331916, MR2590157, MR570795}). The main difference
between the present case and the
cases treated earlier in the references above is the addition of
the jump terms in  the SPDE. The large jumps are easily handled by a boundedness assumption. Estimate for the small jump terms (see Term 2 in \eqref{Exp-norm-Ito}) require the 'Monotonicity inequality' and involves a second order Taylor expansion of the functions of the form
$v \in [0,1] \rightarrow <\tau_{vz}\psi, \phi>$ where $\psi$ is a tempered distribution, $\phi$ is a suitable test function and $z \in \mathbb R^d$, thereby reducing it to the case of second order constant coefficient differential operators as in
\cite{MR3331916}. If the L\'evy measure $\nu$ is bounded, then the required estimate follows provided the coefficient appearing in the small jump terms is bounded (see Remark \ref{term2-via-mon}).

The proofs for the existence and uniqueness of local strong solutions of the SDEs use standard techniques (e.g. those used in \cite{MR2512800, MR2560625}), but requires growth and continuity assumptions of the coefficients which involve an additional parameter. The assumptions are stated in \ref{sigma-b}, \ref{loc-Lip}, \ref{F1}, \ref{F2},\ref{F3} and \ref{G1} and the proofs using these assumptions are given in \cite{fd-SDE}.

Our proof of local existence and
uniqueness for the SPDE involves conditions \ref{F1}, \ref{F2}, \ref{F3} and \ref{G1}-\ref{G2} on the coefficients involving the small and large jumps (Theorem \ref{uniq-via-monotonicity}). In Theorem \ref{complt-eqn}, using the well known `interlacing technique' for L\'evy processes, we eliminate the condition \ref{G2} involving the large jumps.

In this article we restrict ourselves to the study of the
relationship between \eqref{sde} and \eqref{spde}. Our results are
proved in the framework of the Hermite-Sobolev spaces. For
formulations of SPDEs in these spaces see \cite{MR1465436,
MR771478}. We also refer to \cite{MR3235846} for an SPDE associated
to branching measure valued processes and canonically linked to
Brownian motion, and to \cite{MR1242575, MR1280712} for the
connection between measure valued processes and finite dimensional
diffusions. In \cite{MR3075420}, an analogous class of processes
arise in the study of interacting particle systems wherein the
finite dimensional SDE represents the microscopic motion of a
`tagged' particle and the SPDE describes the macroscopic behaviour
of a system of particles.

\section{Preliminaries}\label{sec:1}
\subsection{Topology}\label{sec:1-1}
Let $\Sc$ be the space of rapidly decreasing smooth functions on $\R^d$ with dual $\Sc^\prime$, the space of tempered distributions (see \cite{MR771478}). Let $\mathbb{Z}^d_+:=\{n=(n_1,\cdots, n_d): \; n_i \text{ non-negative integers}\}$. If $n\in\mathbb{Z}^d_+$, we define $|n|:=n_1+\cdots+n_d$.

For $p \in \R$, consider the increasing norms $\|\cdot\|_p$, defined by the inner
products
\begin{equation}
\langle f,g\rangle_p:=\sum_{n\in\mathbb{Z}^d_+}(2|n|+d)^{2p}\langle f,h_n\rangle\langle g,h_n\rangle,\ \ \ f,g\in\Sc.
\end{equation}
In the above equation, $\{h_n: n\in\mathbb{Z}^d_+\}$ is an orthonormal basis for $\mathcal{L}^2(\R^d,dx)$ given by the Hermite functions and $\langle\cdot,\cdot\rangle$ is the usual
inner product in $\mathcal{L}^2(\R^d,dx)$. For $d=1$,
$h_n(t) :=(2^n n!\sqrt{\pi})^{-1/2}\exp\{-t^2/2\}H_n(t)$, where $H_n, t \in \R$ are the Hermite polynomials (see \cite{MR771478}). For $d > 1$, $h_n(x_1,\cdots,x_d) := h_{n_1}(x_1)\cdots h_{n_d}(x_d)$ for all $(x_1,\cdots,x_d) \in \R^d, n\in\mathbb{Z}^d_+$, where the Hermite functions on the right hand side are one-dimensional. We define the Hermite-Sobolev spaces $\Sc_p, p \in \R$ as the completion of $\Sc$ in
$\|\cdot\|_p$. Note that the dual space $\Sc_p^\prime$ is isometrically isomorphic with $\Sc_{-p}$ for $p\geq 0$ and $\inpr{\cdot}{\cdot}$ extends the $\mathcal{L}^2$ inner product to the duality between $\Sc$ and $\Sc^\prime$. We also have $\Sc = \bigcap_{p}(\Sc_p,\|\cdot\|_p), \Sc^\prime=\bigcup_{p>0}(\Sc_{-p},\|\cdot\|_{-p})$ and $\Sc_0 = \mathcal{L}^2(\R^d)$.

Consider the derivative maps denoted by $\partial_i:\Sc\to
\Sc$ for $i=1,\cdots,d$. We can extend these maps by duality to
$\partial_i:\Sc' \to \Sc'$ as follows: for $\psi \in
\Sc'$,
\[\inpr{\partial_i \psi}{\phi}:=-\inpr{\psi}{\partial_i \phi}, \; \forall \phi
\in \Sc.\]
Let $\{e_i: i=1,\cdots,d\}$ be the standard basis vectors in $\R^d$. Then
for any $n =
(n_1,\cdots,n_d) \in {\mathbb Z}_+^d$ we have (see
\cite[Appendix A.5]{MR562914})
\[\partial_i h_n =
\sqrt{\frac{n_i}{2}}h_{n-e_i}-\sqrt{\frac{n_i+1}{2}}h_{n+e_i},\]
with the convention that for a multi-index $n = (n_1,\cdots,n_d)$, if $n_i
< 0$ for some $i$, then $h_n \equiv 0$. The above recurrence relation implies that
$\partial_i:\Sc_{p}\to\Sc_{p-\frac{1}{2}}$ is a bounded linear
operator.\\
For $x \in \R^d$, let $\tau_x$ denote the translation operators on $\Sc$
defined by
$(\tau_x\phi)(y):=\phi(y-x), \, \forall y \in \R^d$. These operators can be
extended to $\tau_x:\Sc'\to \Sc'$ by
\[\inpr{\tau_x\phi}{\psi}:=\inpr{\phi}{\tau_{-x}\psi},\, \forall \psi \in
\Sc.\]
For $x \in \R^d$, $|x|$ will denote its Euclidean norm.
\begin{proposition}\label{tau-x-estmte}
The translation operators $\tau_x, x \in \R^d$ have the following properties:
\begin{enumerate}[label=(\alph*)]
\item For $x \in \R^d$ and any $p \in \R$, $\tau_x: \Sc_p\to\Sc_p$
is a bounded linear map. In particular, there exists a real polynomial $P_k$ of
degree $k = 2(\lfloor|p|\rfloor +1)$ such that
\[\|\tau_x\phi\|_p\leq P_k(|x|)\|\phi\|_p, \, \forall \phi \in \Sc_p.\]
\item For any $x \in \R^d$ and any $i=1,\cdots,d$ we have $\tau_x\partial_i = \partial_i\tau_x$.
\item Fix $\phi \in \Sc_p$ for some $p \in \R$. The map $x \in\R^d \mapsto \tau_x\phi \in \Sc_p$ is continuous.
\end{enumerate}
\end{proposition}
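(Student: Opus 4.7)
\emph{Part (a) is the substantive claim.} The plan is to first prove the polynomial estimate for $p$ a non-negative integer, then lift to arbitrary real $p$ by interpolation and duality. For integer $p \geq 0$ I would invoke the standard equivalence of Hermite-Sobolev and Schwartz-type norms
\[\|\phi\|_p^2 \;\asymp\; \sum_{|\alpha|+|\beta|\leq 2p} \|y^\alpha \partial^\beta \phi\|_{L^2(\R^d)}^2, \qquad \phi \in \Sc,\]
which follows from the spectral decomposition of the harmonic oscillator in the Hermite basis. Using (b), which says $\partial^\beta$ and $\tau_x$ commute on $\Sc$, together with the change of variable $z = y-x$, one has $\|y^\alpha \partial^\beta \tau_x\phi\|_{L^2} = \|(z+x)^\alpha \partial^\beta\phi\|_{L^2}$; multinomial expansion of $(z+x)^\alpha$ then bounds this by a polynomial of degree $|\alpha| \leq 2p$ in $|x|$ times a Schwartz seminorm of $\phi$, and hence by $C_p(1+|x|)^{2p}\|\phi\|_p$. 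For non-integer $p \in [0,\infty)$ I would set $q = \lfloor p \rfloor + 1$ and complex-interpolate between $\Sc_0 = L^2$, on which $\tau_x$ is an isometry, and $\Sc_q$; the identification $[\Sc_0,\Sc_q]_{p/q} = \Sc_p$ is consistent with the spectral definition of the norms and yields a polynomial bound of degree $2q = 2(\lfloor p\rfloor+1)$. For $p < 0$, the defining relation $\langle \tau_x\psi,\phi\rangle = \langle\psi,\tau_{-x}\phi\rangle$ identifies $\tau_x$ on $\Sc_{-|p|}$ with the Banach transpose of $\tau_{-x}$ on $\Sc_{|p|}$, so their operator norms coincide and the same polynomial bound of degree $2(\lfloor|p|\rfloor+1)$ is inherited.

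\emph{Part (b)} reduces to the pointwise identity $(\tau_x\partial_i\phi)(y)=(\partial_i\phi)(y-x)=\partial_i[\phi(\cdot-x)](y)=(\partial_i\tau_x\phi)(y)$ on $\Sc$, which is propagated to $\Sc'$ in one line by the definitions of the extensions of $\tau_x$ and $\partial_i$ to distributions. For \emph{Part (c)}, I would use (a) to deduce that $\{\tau_y : |y - x| \leq 1\}$ is uniformly bounded in operator norm on $\Sc_p$, so by a standard $3\varepsilon$-argument together with density of $\Sc$ in $\Sc_p$ it suffices to prove continuity at a Schwartz test function. For $\phi \in \Sc$, Taylor's formula gives
\[\tau_y\phi - \tau_x\phi \;=\; \sum_{i=1}^d (x_i - y_i)\int_0^1 \tau_{x+t(y-x)}(\partial_i\phi)\,dt,\]
and applying (a) to $\partial_i\phi\in\Sc$ under the integral yields $\|\tau_y\phi - \tau_x\phi\|_p = O(|y-x|)$, which gives the required continuity.

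\emph{The main obstacle} I anticipate is the interpolation step in (a): one must justify $[\Sc_0,\Sc_q]_{p/q}=\Sc_p$, a statement that is standard for scales defined by fractional powers of a positive self-adjoint operator (here the harmonic oscillator acting on $L^2(\R^d)$, with the $h_n$ as eigenfunctions) but nevertheless needs an explicit citation or argument. An alternative, more hands-on route would be to write $\|\tau_x\phi\|_p = \|T^p \tau_x\phi\|_{L^2}$ where $T h_n = (2|n|+d) h_n$ and to estimate the commutator $[T^p,\tau_x]$ inductively via the commutation relations between $\tau_x$ and the position/momentum operators; this avoids abstract interpolation theory at the cost of more bookkeeping but yields the same polynomial degree.
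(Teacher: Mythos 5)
The paper does not actually prove this proposition: it delegates part (a) to \cite[Theorem 2.1]{MR1999259}, part (c) to the proof of \cite[Proposition 3.1]{MR2373102}, and dismisses (b) as well known. Your argument is therefore a genuinely different, self-contained route, and I believe it is correct. For (a) your scheme (integer $p$ via the equivalence of $\|\cdot\|_p$ with the Shubin-type norm $\sum_{|\alpha|+|\beta|\le 2p}\|y^\alpha\partial^\beta\cdot\|_{L^2}$, then complex interpolation for fractional $p\ge 0$, then duality for $p<0$) delivers the stated degree $2(\lfloor|p|\rfloor+1)$; the two inputs you lean on --- the integer-order norm equivalence and the identity $[\Sc_0,\Sc_q]_{p/q}=\Sc_p$ for the scale generated by the harmonic oscillator $H=-\Delta+|y|^2$ --- are both standard for fractional powers of a positive self-adjoint operator, though, as you anticipate, they would need citations; the cited reference instead obtains the polynomial bound by direct estimates on the Hermite coefficients of $\tau_x\phi$ (via the action of translation through the creation/annihilation operators), which avoids interpolation theory altogether but is more computational. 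Your part (c) is essentially the argument the literature uses: the fundamental-theorem-of-calculus identity combined with the boundedness of $\partial_i$ is exactly the quantitative Lipschitz estimate recorded later as Proposition \ref{ext-tau} (there on $\Sc_{p+\frac12}$, here for $\phi\in\Sc$), followed by density and the uniform local bound from (a). One small point worth a sentence in a written-up version: the vector-valued identity $\tau_y\phi-\tau_x\phi=\sum_i(x_i-y_i)\int_0^1\tau_{x+t(y-x)}(\partial_i\phi)\,dt$ should be justified without presupposing continuity of the integrand in $\Sc_p$ (which is what you are proving); pairing against each $h_m$ and using dominated convergence gives weak measurability and the pointwise identity of coefficients, after which the norm estimate follows, so there is no circularity. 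Part (b) is fine as stated.
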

\begin{proof}
See \cite[Theorem 2.1]{MR1999259} for the proof of part $(a)$ and the proof of \cite[Proposition 3.1]{MR2373102} for the proof of part $(c)$. Part
$(b)$ is well-known.\end{proof}

\begin{proposition}[{\cite[Proposition 3.8]{MR3687773}}]\label{ext-tau}
Let $p>d+\frac{1}{2}$. Then for any $\psi \in \Sc_{p+\frac{1}{2}}$ and any positive integer $n$, there exists a constant $D(n)>0$ such that for all $x_1,x_2\in \R^d$ with $|x_1|, |x_2| \leq n$, we have
\begin{equation}
\|\tau_{x_1}\psi-\tau_{x_2}\psi\|_p\leq D(n)\|\psi\|_{p+\frac{1}{2}} |x_1-x_2|.
\end{equation}
In particular, for any bounded set $\K$ in $\Sc_{p+\frac{1}{2}}$ and any positive integer $n$, there exists a constant $D(\K,n)>0$ such that for all $x_1,x_2\in \R^d$ with $|x_1|, |x_2| \leq n$, we have
\begin{equation}\label{tau1}
\|\tau_{x_1}\psi-\tau_{x_2}\psi\|_p\leq D(\K,n) |x_1-x_2|, \ \forall \psi\in \K.
\end{equation}
\end{proposition}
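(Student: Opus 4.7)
The plan is to reduce to the fundamental theorem of calculus along the straight-line segment joining $x_2$ and $x_1$, and then use the bounds in Proposition \ref{tau-x-estmte} together with the fact that $\partial_i : \Sc_{p+\frac{1}{2}} \to \Sc_p$ is bounded. First I would prove the estimate when $\psi \in \Sc$, where everything can be computed pointwise and no measurability issues arise, and then extend to $\psi \in \Sc_{p+\frac{1}{2}}$ by density using the continuity of all operators involved.

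Set $y(v) := x_2 + v(x_1 - x_2)$ for $v \in [0,1]$. For $\psi \in \Sc$, the map $v \mapsto \tau_{y(v)} \psi$ is smooth into $\Sc$, and from the pointwise identity $(\tau_x \psi)(y) = \psi(y-x)$ together with part (b) of Proposition \ref{tau-x-estmte} one computes
\[
\frac{d}{dv} \tau_{y(v)} \psi \;=\; -\sum_{i=1}^d (x_1 - x_2)_i \, \tau_{y(v)} \partial_i \psi .
\]
Therefore, by the fundamental theorem of calculus,
\[
\tau_{x_1} \psi - \tau_{x_2} \psi \;=\; -\int_0^1 \sum_{i=1}^d (x_1 - x_2)_i \, \tau_{y(v)} \partial_i \psi \, dv ,
\]
which I would interpret as a Bochner integral in $\Sc_p$; continuity of the integrand into $\Sc_p$ follows from Proposition \ref{tau-x-estmte}(c).

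Taking $\|\cdot\|_p$ norms and applying Proposition \ref{tau-x-estmte}(a) with $k = 2(\lfloor p \rfloor + 1)$, together with the boundedness of $\partial_i : \Sc_{p+\frac{1}{2}} \to \Sc_p$ (say with operator norm $C_p$), I get
\[
\|\tau_{y(v)} \partial_i \psi\|_p \;\leq\; P_k(|y(v)|) \, \|\partial_i \psi\|_p \;\leq\; C_p \, P_k(|y(v)|) \, \|\psi\|_{p+\frac{1}{2}} .
\]
Since $|y(v)| \leq \max(|x_1|,|x_2|) \leq n$ for all $v \in [0,1]$ and $P_k$ is nondecreasing on $[0,\infty)$ (after replacing it by a suitable polynomial bound), we obtain the uniform bound $P_k(|y(v)|) \leq P_k(n)$. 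Combining with $\sum_i |(x_1-x_2)_i| \leq \sqrt{d}\,|x_1-x_2|$ yields
\[
\|\tau_{x_1}\psi - \tau_{x_2}\psi\|_p \;\leq\; \sqrt{d}\, C_p\, P_k(n)\, \|\psi\|_{p+\frac{1}{2}} \, |x_1 - x_2|,
\]
so we may take $D(n) := \sqrt{d}\, C_p\, P_k(n)$. For the ``in particular'' statement, since $\K \subset \Sc_{p+\frac{1}{2}}$ is bounded, $M := \sup_{\psi \in \K} \|\psi\|_{p+\frac{1}{2}} < \infty$, and setting $D(\K,n) := M \cdot D(n)$ gives \eqref{tau1}. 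Finally, the extension from $\Sc$ to all of $\Sc_{p+\frac{1}{2}}$ is immediate by density and by the continuity of $\tau_{x_1}, \tau_{x_2} : \Sc_{p+\frac{1}{2}} \to \Sc_{p+\frac{1}{2}}$ followed by the continuous inclusion $\Sc_{p+\frac{1}{2}} \hookrightarrow \Sc_p$.

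The only subtle point is rigorously establishing the vector-valued derivative identity and the Bochner integrability in $\Sc_p$; this is where the density argument saves work, since on $\Sc$ the formula holds in the classical sense and all bounds transfer by continuity. Everything else is a bookkeeping application of the polynomial translation bound in Proposition \ref{tau-x-estmte}(a) and the differential continuity in Proposition \ref{tau-x-estmte}(c).
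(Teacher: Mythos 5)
Your proof is correct. Note that the paper does not give its own argument for this proposition; it simply defers to the proof of Proposition 3.8 in the cited reference, so there is no internal proof to compare against line by line. That said, your route — writing $\tau_{x_1}\psi-\tau_{x_2}\psi$ as the integral of $-\sum_i(x_1-x_2)_i\,\tau_{y(v)}\partial_i\psi$ along the segment, then combining the polynomial translation bound of Proposition \ref{tau-x-estmte}(a) with the boundedness of $\partial_i:\Sc_{p+\frac{1}{2}}\to\Sc_p$ — is exactly the first-order version of the Taylor-along-the-segment technique this paper itself uses in Section \ref{sec:3} (compare the computation of $f^{\prime}$ and $f^{\prime\prime}$ around \eqref{2nd-order-Taylor}), so it is very much in the spirit of the source. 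Two small remarks: your argument nowhere uses the hypothesis $p>d+\frac{1}{2}$, so you have in fact proved the estimate for arbitrary $p$ (the restriction is presumably an artifact of the cited proof or of the surrounding context); and the exponent in Proposition \ref{tau-x-estmte}(a) is $k=2(\lfloor|p|\rfloor+1)$ rather than $2(\lfloor p\rfloor+1)$, which is immaterial here since $p>0$. The one point you flag as subtle — the vector-valued differentiability — can be bypassed entirely by pairing both sides of the integral identity with the Hermite basis $h_m$ and invoking the scalar fundamental theorem of calculus plus Fubini, which reduces everything to the pointwise identity for $\psi\in\Sc$; the density step then goes through exactly as you describe.
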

\begin{proof}
The proof is contained in the proof of \cite[Proposition 3.8]{MR3687773}, specifically, in the arguments after \cite[equation (3.16)]{MR3687773}.
\end{proof}

Let $\sigma =
(\sigma_{ij})$ be a constant $d \times r$ matrix with $(a_{ij})=(\sigma
\sigma^t)_{ij}$ and $b = (b_1,...,b_d) \in \R^d$. For $\phi \in \Sc$, we define
\[
\begin{array}{l}
L\phi:=\frac{1}{2}\sum_{i,j=1}^d a_{ij}\partial_{ij}^2 \phi - \sum_{i=1}^d
b_i\partial_i \phi,\\
A_i\phi:=-\sum_{j=1}^d \sigma_{ji}(\partial_j \phi),\, i = 1,\cdots,r\\
A\phi=(A_1\phi,\dots,A_r\phi)
\end{array}
\]
\begin{theorem}[{\cite[Theorem 2.1 and Remark 3.1]{MR2590157}}]\label{constant-monotonicity} For every $p \in \R$, there exists a positive constant $C = C(p,d,(\sigma_{ij}),(b_j))$,
such that
\begin{equation}
2\inpr[p]{\phi}{L\phi} + \|A\phi\|_{HS(p)}^2 \leq
C.\|\phi\|_p^2
\end{equation}
for all $\phi \in \Sc$, where $\|A\phi\|_{HS(p)}^2:= \sum_{i=1}^r
\|A_i\phi\|_p^2$. Furthermore, by density arguments the above inequality can
be extended to all $\phi \in \Sc_{p+1}$. The constant $C$ depends on $\sigma_{ij}, b_i$ through the maximum of $|\sigma_{ij}|, |b_i|$ and hence the inequality can be extended to the case where $\sigma, b$ are bounded processes parametrized by some set.
\end{theorem}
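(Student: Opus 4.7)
The plan is to exploit the hypothesis $a=\sigma\sigma^T$ to rewrite $L$ in terms of the operators $A_k$. Since $a_{ij} = \sum_k \sigma_{ik}\sigma_{jk}$, a direct computation gives $\sum_k A_k^2\phi = \sum_{i,j}a_{ij}\partial^2_{ij}\phi$, whence
$$L\phi = \tfrac{1}{2}\sum_{k=1}^r A_k^2\phi - \sum_{i=1}^d b_i\partial_i\phi.$$
Substituting into the left side reduces the monotonicity inequality to establishing, for all $\phi\in\Sc$,
$$\sum_{k}\Bigl[\langle \phi, A_k^2\phi\rangle_p + \|A_k\phi\|_p^2\Bigr] \leq C_1\|\phi\|_p^2 \quad\text{and}\quad |\langle \phi, \partial_i\phi\rangle_p|\leq C_2\|\phi\|_p^2.$$
I would work throughout in the Hermite basis: writing $\phi = \sum_n c_n h_n$ with $c_n = \langle\phi,h_n\rangle$, and systematically applying the recurrence $\partial_i h_n = \sqrt{n_i/2}\,h_{n-e_i}-\sqrt{(n_i+1)/2}\,h_{n+e_i}$ to identify the Hermite coefficients of $\partial_i\phi$, $A_k\phi$ and $A_k^2\phi$, reduces every inner product to an explicit series in $(c_n)$.

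For the drift piece, a direct reindexing yields
$$\langle \phi, \partial_i\phi\rangle_p = \sum_m \sqrt{(m_i+1)/2}\,c_m c_{m+e_i}\bigl[(2|m|+d)^{2p} - (2|m|+2+d)^{2p}\bigr].$$
The bracket is $O\bigl((2|m|+d)^{2p-1}\bigr)$ by the mean value theorem, so each summand is bounded by $C(2|m|+d)^{2p-1/2}|c_m c_{m+e_i}|$, and Cauchy-Schwarz on the Hermite weights (with reindexing $m\mapsto m+e_i$ in one factor) gives $|\langle \phi, \partial_i\phi\rangle_p|\leq C\|\phi\|_p^2$.

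For the quadratic piece I would introduce the adjoint $A_k^{*p}$ of $A_k$ in $\langle\cdot,\cdot\rangle_p$ and use
$$\langle \phi, A_k^2\phi\rangle_p + \|A_k\phi\|_p^2 = \langle (A_k + A_k^{*p})\phi, A_k\phi\rangle_p.$$
In $\mathcal{L}^2$ (the case $p=0$), integration by parts gives $A_k^{*0} = -A_k$ and the right side vanishes identically. For general $p$, writing $\langle f,g\rangle_p = \langle H^p f, H^p g\rangle_0$ with $H$ the Hermite operator $Hh_n=(2|n|+d)h_n$, one checks that $A_k^{*p} = -H^{-2p}A_k H^{2p}$. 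Applied to $h_n$, this shows that $(A_k + A_k^{*p})h_n$ produces exactly the same basis shifts $h_{n\pm e_j}$ as $A_k h_n$, but multiplied by factors $1-\bigl(1\pm 2/(2|n|+d)\bigr)^{2p}$, each of size $O\bigl((2|n|+d)^{-1}\bigr)$. This $O(1/|n|)$ decay precisely offsets the $O(|n|^{1/2})$ growth carried by $A_k$. Expanding $\bigl((A_k+A_k^{*p})\phi\bigr)_m\bigl(A_k\phi\bigr)_m$ term by term and collapsing the $k$-sum via $\sum_k \sigma_{jk}\sigma_{j'k}=a_{jj'}$, every contribution is controlled by $C\,|a_{jj'}|(2|m|+d)^{2p}|c_{m\pm e_j}c_{m\pm e_{j'}}|$, and a final Cauchy-Schwarz with reindexing gives $C\|\phi\|_p^2$. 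Extension to $\phi\in\Sc_{p+1}$ is immediate by density since $\|A_k\phi\|_p\leq C\|\phi\|_{p+1/2}$; the polynomial dependence of $C$ on $\max_{ij}|\sigma_{ij}|,\max_i|b_i|$ is visible throughout, which yields the parametric version in the last sentence of the theorem.

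The main obstacle is precisely this last cancellation. The naive route $|\langle(A_k+A_k^{*p})\phi,A_k\phi\rangle_p|\leq\|(A_k+A_k^{*p})\phi\|_p\|A_k\phi\|_p\leq C\|\phi\|_{p-1/2}\|\phi\|_{p+1/2}$ is too lossy, since by log-convexity $\|\phi\|_p^2\leq\|\phi\|_{p-1/2}\|\phi\|_{p+1/2}$ and the inequality points the wrong way. The cancellation must be displayed at the level of Hermite coefficients, and it is precisely the algebraic structure $a=\sigma\sigma^T$ that enables the sum over $k$ to collapse into $a_{jj'}$ and close the estimate.
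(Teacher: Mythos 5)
The paper does not prove this statement --- it imports it wholesale from \cite{MR2590157} --- so there is no in-paper argument to measure your sketch against. Your reconstruction is correct and is essentially the standard proof from the sources the paper relies on: the reduction $L=\tfrac12\sum_kA_k^2-\sum_ib_i\partial_i$ via $a=\sigma\sigma^t$, the Hermite-coefficient/mean-value estimate for $\langle\phi,\partial_i\phi\rangle_p$, and above all the identity $\langle\phi,A_k^2\phi\rangle_p+\|A_k\phi\|_p^2=\langle(A_k+A_k^{*p})\phi,A_k\phi\rangle_p$ with $A_k^{*p}=-H^{-2p}A_kH^{2p}$, which is precisely the mechanism of \cite{MR3331916} that this paper itself invokes later to prove \eqref{spl-mono}: there the statement $\partial_i^\ast=-\partial_i+\mathbb{T}_i$ with $\langle\partial_i\cdot,\mathbb{T}_j\cdot\rangle$ a bounded bilinear form encodes exactly your observation that the correction factors $1-\bigl(1\pm 2/(2|n|+d)\bigr)^{2p}$ are $O\bigl((2|n|+d)^{-1}\bigr)$ and offset the $O(|n|^{1/2})$ growth of $A_k$. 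One small correction of emphasis: the collapse $\sum_k\sigma_{jk}\sigma_{j'k}=a_{jj'}$ is needed only to rewrite $L$ in terms of $\sum_k A_k^2$ at the outset; after that, each $k$-term $\langle(A_k+A_k^{*p})\phi,A_k\phi\rangle_p$ is bounded by $C\|\phi\|_p^2$ individually, so it is the adjoint cancellation within each $k$, not the recombination into $a_{jj'}$, that closes the estimate. Your diagnosis of why the naive interpolation bound $\|\phi\|_{p-1/2}\|\phi\|_{p+1/2}$ fails is exactly right and is the reason the cancellation must be displayed at the level of Hermite coefficients.
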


\subsection{An It\^{o} formula}\label{sec:1-2}
Let $\big(\Omega,\F,\{\F_t\}_{t\geq0},P\big)$ be a filtered complete probability space satisfying the usual conditions viz. $\F_0$ contains all $A\in\F$, s.t. $P(A)=0$ and $\F_t=\bigcap_{s>t}\F_s, t \geq 0$. Given two real valued semimartingales $\{X^1_t\}$ and $\{X^2_t\}$, let $\{[X^1,X^2]_t^c\}$ denote the continuous part of the covariation process $\{[X^1,X^2]_t\}$.

\begin{theorem}\label{random-initial}
Let $p>0$. Let $\xi$ be an $\Sc_{-p}$ valued $\F_0$ measurable random variable. Let $\{X_t\}$ be an $\R^d$
valued $(\F_t)$ semimartingale with $X_t=(X^1_t,\cdots,X^d_t)$. Then
$\{\tau_{X_t}\xi\}$ is an $\Sc_{-p}$ valued semimartingale and
\[\sum_{s \leq t}\left[\tau_{X_s}\xi - \tau_{X_{s-}}\xi +
\sum_{i=1}^d (\bigtriangleup X^i_s\,\partial_i\tau_{X_{s-}}\xi)\right]\] is an
$\Sc_{-p-1}$ valued process of finite variation and we have the following
equality
in
$\Sc_{-p-1}$, a.s.
\begin{equation}\label{Ito-formula-random-initl}
\begin{split}
\tau_{X_t}\xi &= \tau_{X_0}\xi - \sum_{i=1}^d \int_0^t
\partial_i\tau_{X_{s-}}\xi\,
dX^i_s + \frac{1}{2}\sum_{i,j=1}^d \int_0^t \partial_{ij}^2\tau_{X_{s-}}\xi\,
d[X^i,X^j]^c_s\\
&+\sum_{s \leq t}\left[\tau_{X_s}\xi - \tau_{X_{s-}}\xi +
\sum_{i=1}^d (\bigtriangleup X^i_s\,\partial_i\tau_{X_{s-}}\xi)\right], \, t
\geq 0.
\end{split}
\end{equation}
\end{theorem}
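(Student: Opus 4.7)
My plan is to reduce \eqref{Ito-formula-random-initl} to the deterministic-initial-condition version \cite[Theorem 4.7]{MR3647067} by approximating the $\F_0$-measurable random variable $\xi$ with $\Sc_{-p}$-valued $\F_0$-simple random variables and passing to the limit termwise.

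\emph{Simple $\xi$.} Suppose first that $\xi = \sum_{k=1}^N \psi_k \indicator{A_k}$, with $\psi_k \in \Sc_{-p}$ deterministic and $\{A_k\} \subset \F_0$ a measurable partition of $\Omega$. Because $\tau_x$ is linear on $\Sc^\prime$ and $\indicator{A_k}$ is $\F_0$-measurable and hence predictable, multiplication by $\indicator{A_k}$ commutes with $\tau_{X_s}$, with the (vector-valued) stochastic integral against $X$, with the Lebesgue--Stieltjes integral against $[X^i,X^j]^c$, and with the pathwise jump sums. Applying \cite[Theorem 4.7]{MR3647067} to each deterministic $\psi_k$, multiplying the resulting identity by $\indicator{A_k}$, and summing over $k$ yields \eqref{Ito-formula-random-initl} for simple $\xi$ and simultaneously shows that $\tau_{X_t}\xi$ is an $\Sc_{-p}$-valued semimartingale with the jump sum being an $\Sc_{-p-1}$-valued finite-variation process.

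\emph{General $\xi$.} Using the separability of $\Sc_{-p}$, pick $\F_0$-measurable simple $\xi^n$ with $\|\xi^n - \xi\|_{-p} \to 0$ a.s.\ and $\|\xi^n\|_{-p} \leq 2\|\xi\|_{-p}$. Introduce the stopping times $T_m := \inf\{t : |X_t| \vee |X_{t-}| \geq m\} \wedge m$ and the $\F_0$-sets $B_m := \{\|\xi\|_{-p} \leq m\}$; since $T_m \uparrow \infty$ a.s.\ and $\bigcup_m B_m = \Omega$, it is enough to obtain the formula on $[0, T_m] \cap B_m$ for each $m$. On this set Proposition \ref{tau-x-estmte}(a) gives $\|\tau_x(\xi^n - \xi)\|_{-p} \leq P_k(m)\,\|\xi^n - \xi\|_{-p} \to 0$ uniformly for $|x| \leq m$, and the boundedness of $\partial_i : \Sc_{-p} \to \Sc_{-p-1/2}$ transfers the same control to the first and second spatial derivatives in $\Sc_{-p-1}$. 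This handles the left-hand side directly, the $[X^i,X^j]^c$-term by dominated convergence, and the stochastic integral term by the It\^o isometry/BDG in the Hilbert space $\Sc_{-p-1}$.

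\emph{Main obstacle.} The delicate point is controlling the jump sum in $\Sc_{-p-1}$ uniformly in $\xi^n \to \xi$. I would derive a second-order Taylor expansion of the curve $v \in [0,1] \mapsto \tau_{X_{s-} + v \Delta X_s}\xi \in \Sc_{-p-1}$, whose first and second derivatives are identified, via Proposition \ref{tau-x-estmte}(b), with translates of $\partial_i \xi$ and $\partial_{ij}^2 \xi$; combining this with an $\Sc_{-p-1}$-valued analogue of Proposition \ref{ext-tau} applied to these derivatives produces the uniform bound
\[
\Bigl\| \tau_{X_s}\xi - \tau_{X_{s-}}\xi + \sum_{i=1}^d \Delta X^i_s\,\partial_i \tau_{X_{s-}}\xi \Bigr\|_{-p-1} \leq C_m\,|\Delta X_s|^2\,\|\xi\|_{-p}
\]
on $[0, T_m] \cap B_m$. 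Since $\sum_{s \leq T_m}|\Delta X_s|^2 < \infty$ a.s.\ for any $\R^d$-valued semimartingale, this bound simultaneously establishes that the jump sum converges absolutely in $\Sc_{-p-1}$, that it defines an $\Sc_{-p-1}$-valued finite-variation process, and that the passage $\xi^n \to \xi$ inside the sum is justified by the same estimate with $\xi$ replaced by $\xi^n - \xi$. Assembling the four convergences yields \eqref{Ito-formula-random-initl} on $[0, T_m] \cap B_m$, and letting $m \to \infty$ completes the proof.
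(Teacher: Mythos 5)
Your proposal is correct and follows essentially the same route as the paper: reduce to the deterministic-initial-condition It\^o formula of \cite{MR3647067} by writing $\xi$ as a limit of $\F_0$-simple random variables, using the commutation of $\indicator{A_k}$ with $\tau_x$ and with the stochastic integrals, and localizing so that $\|\xi\|_{-p}$ and $|X|$ are bounded. The paper compresses the final passage to the limit into one sentence, whereas you supply the key second-order Taylor estimate controlling the jump sum in $\Sc_{-p-1}$; that estimate is exactly the one used in \cite{MR3647067} and later in the paper's equation \eqref{2nd-order-Taylor}, so no new gap is introduced.
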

\begin{proof}
The case when $\xi$ is deterministic was proved in \cite[Theorem 4.5]{MR3647067}. We indicate the proof for a random $\xi$ via two observations.
\begin{enumerate}[label=(\roman*)]
\item Recall that $\partial_i:\Sc_{q}\to\Sc_{q-\frac{1}{2}}, 1 \leq i \leq d$ are bounded linear
operators for every $q \in \R$.
By Proposition \ref{tau-x-estmte}, the processes $\{\tau_{X_{t-}}\xi\}, \{\partial_i\tau_{X_{t-}}\xi\}, \{\partial^2_{ij}\tau_{X_{t-}}\xi\}, 1 \leq i,j \leq d$ are locally norm bounded predictable processes with values in $\Sc_{-p}, \Sc_{-p-\tfrac{1}{2}}$ and $\Sc_{-p-1}$ respectively. Hence the integrals $\{\int_0^t
\partial_i\tau_{X_{s-}}\xi\,
dX^i_s\}, \{\int_0^t \partial_{ij}^2\tau_{X_{s-}}\xi\,
d[X^i,X^j]^c_s\}, 1 \leq i,j \leq d$ exist.
\item Given any $\F_0$ measurable set $F$, an $\Sc_{-p}$ valued predictable step process $\{G_t\}$ and an $\R^d$ valued rcll semimartingale $\{X_t\}$, we have a.s.
\begin{equation}\label{indicator-F}
\mathbbm{1}_F\int_0^tG_sdX_s=\int_0^t\mathbbm{1}_FG_sdX_s,\ \ \ t\geq0.
\end{equation}
This equality can be extended to the case involving locally norm-bounded $\Sc_{-p}$ valued predictable process $\{G_t\}$. Again, given any $\F_0$ measurable set $F$, $\phi\in\Sc_{-p}$, $\psi\in\Sc$ and $x\in\R^d$ we have
\begin{equation}\label{indicator-dual}
\begin{split}
\langle\mathbbm{1}_F\tau_x\phi,\psi\rangle&=\mathbbm{1}_F\langle\tau_x\phi,\psi\rangle=\mathbbm{1}_F\langle\phi,\tau_{-x}\psi\rangle\\
&=\langle\mathbbm{1}_F\phi,\tau_{-x}\psi\rangle=\langle\tau_x(\mathbbm{1}_F\phi),\psi\rangle
\end{split}
\end{equation}
and hence $\mathbbm{1}_F\tau_x\phi=\tau_x(\mathbbm{1}_F\phi)$. Similarly $\mathbbm{1}_F\tau_x\phi=\tau_{\mathbbm{1}_F(x)}(\mathbbm{1}_F\phi)$.
\end{enumerate}

Since $\Omega = \bigcup_{M=1}^\infty \{\omega: \|\xi(\omega)\|_{-p} \leq M\}$, it is enough to establish \eqref{Ito-formula-random-initl} for almost every $\omega$ in $\{\omega: \|\xi(\omega)\|_{-p} \leq M\}$ for every fixed positive integer $M$. Multiplying \eqref{Ito-formula-random-initl} by $\mathbbm{1}_{\{\|\xi\|_{-p} \leq M\}}$, it is enough to establish the result when $\xi$ is norm bounded.

From \cite[Theorem 4.5]{MR3647067} and \eqref{indicator-F},  \eqref{indicator-dual} we can establish the required result when $\xi$ is an $\Sc_{-p}$ valued simple $\F_0$ measurable random variable. A limiting argument then proves the result when $\xi$ is norm bounded. This completes the proof.
\end{proof}

\section{Finite dimensional SDEs}\label{sec:2}
\subsection{setup and notations}\label{sec:2-1}
We use the following notations throughout the paper.
\begin{itemize}
\item The set of positive integers will be denoted by $\N$. Recall that for $x \in \R^n$, $|x|$ denotes its Euclidean norm. The transpose of any element $x \in \R^{n\times m}$ will be denoted by $x^t$.
\item For any $r > 0$, define $\oo(0,r):=\{x \in \R^d: |x|< r\}$. Then $\overline{\oo(0,r)} = \{x \in \R^d: |x| \leq r\}$ and $\oo(0,r)^c = \{x \in \R^d: |x|\geq r\}$.
\item Let $\big(\Omega,\F,\{\F_t\}_{t\geq0},P\big)$ be a filtered complete probability space satisfying the usual conditions viz. $\F_0$ contains all $A\in\F$, s.t. $P(A)=0$ and $\F_t=\bigcap_{s>t}\F_s, t \geq 0$.
\item Let $p>0$. Let $\sigma = (\sigma_{ij})_{d\times d}, b=(b_1, \cdots, b_d)^t$ be such that $\sigma_{ij}, b_i:\Omega\to\Sc_{p}$ are $\F_0$ measurable and
\[\beta:= \sup\{\|\sigma_{ij}(\omega)\|_p, \|b_i(\omega)\|_p:\omega \in \Omega, 1 \leq i,j \leq d\} < \infty.\tag*{$\mathbf{(\sigma b)}$} \label{sigma-b}\]
\item Define $\bar\sigma:\Omega\times\R^d\times\Sc_{-p}\to \R^{d\times d}$ and $\bar b:\Omega\times\R^d\times\Sc_{-p}\to \R^d$ by $\bar\sigma(\omega,z;y) := \inpr{\sigma(\omega)}{\tau_z y}$ and $\bar b(\omega,z;y) :=
\inpr{b(\omega)}{\tau_z y}$, where $(\inpr{\sigma(\omega)}{\tau_z y})_{ij}:= \inpr{\sigma_{ij}(\omega)}{\tau_z y}$ and $(\inpr{b(\omega)}{\tau_z y})_i := \inpr{b_i(\omega)}{\tau_z y}$.
\item Let $F:\Omega\times\mathcal{S}_{-p}\times \oo(0,1) \to
\R^d$ and $G:\Omega\times\mathcal{S}_{-p}\times \oo(0,1)^c \to
\R^d$ be $\F_0\otimes \mathcal{B}(\Sc_p)\otimes\mathcal{B}(\oo(0,1))/\mathcal{B}(\R^d)$ and $\F_0\otimes \mathcal{B}(\Sc_p)\otimes\mathcal{B}(\oo(0,1)^c)/\mathcal{B}(\R^d)$ measurable respectively. Here $\mathcal{B}(\K)$ denotes the Borel $\sigma$-field of set $\K$.
\item Define $\bar F: \Omega\times\R^d\times \oo(0,1)\times\Sc_{-p}\to \R^d$, $\bar G: \Omega\times\R^d \times \oo(0,1)^c\times\Sc_{-p}\to \R^d$ by $\bar F(\omega,z,x;y) := F(\omega,\tau_z y,x), \ \bar G(\omega,z,x;y) := G(\omega,\tau_z y, x)$.
\item Let $\{B_t\}$ denote a standard Brownian motion and let $N$ denote a Poisson random measure driven by a L\'evy measure $\nu$.  $\widetilde N$ will denote the corresponding compensated random measure. We also assume that $B$ and $N$ are independent.
\item In our arguments, at times we use time intervals of the form $[0,T]$. In such cases, $T$ will always assumed to be finite i.e. $[0,T]$ will be a finite time interval.
\item Given a process $\{X_t\}$ and a stopping time $\eta$, the stopped process $\{X^\eta_t\}$ is defined as $X^\eta_t := X_{t \wedge \eta}$.
\end{itemize}

Consider the following SDE in $\R^d$,
\begin{equation}\label{fd-sde-sln}
\begin{split}
dU_{t} &=\bar b(U_{t-};\xi)dt+ \bar\sigma(U_{t-};\xi)\cdot dB_t +\int_{(0 < |x| < 1)}  \bar F(U_{t-},x;\xi)\, \widetilde
N(dtdx)\\
&+\int_{(|x| \geq  1)} \bar G(U_{t-},x;\xi) \,
N(dtdx), \quad t\geq 0 \\
U_0&=\kappa,
\end{split}
\end{equation}
where $\xi$ is an $\Sc_{-p}$ valued $\F_0$-measurable random variable and $\kappa$ is an $\R^d$ valued $\F_0$-measurable random variable. Unless stated otherwise, $\xi$ and $\kappa$ will be taken to be independent of the noise $B$ and $N$. Note that the $i$-th component of $\int_0^t \bar\sigma(U_{s-};\xi)\cdot dB_s$ is $\sum_{j=1}^d\int_0^t \bar\sigma_{ij}(U_{s-};\xi)\, dB^j_s$. We list some hypotheses.

\begin{enumerate}[label=\textbf{(F\arabic*)},ref=\textbf{(F\arabic*)}]
\item\label{F1} For all $\omega\in\Omega$ and $x \in \oo(0,1)$ there exists a constant $C_x \geq 0$ s.t.
\begin{equation}\label{asm1}
\lvert F(\omega,y_1,x)-F(\omega,y_2,x)\rvert\leq C_x\|y_1-y_2\|_{-p-\frac{1}{2}}, \forall y_1,y_2\in\Sc_{-p}.
\end{equation}
We assume $C_x$ to depend only on $x$ and independent of $\omega$. Since $\|y\|_{-p-\frac{1}{2}} \leq \|y\|_{-p}, \forall y \in \Sc_{-p}$, we have
\[\lvert F(\omega,y_1,x)-F(\omega,y_2,x)\rvert\leq C_x\|y_1-y_2\|_{-p}, \forall y_1,y_2\in\Sc_{-p}.\]
\item\label{F2} The constant $C_x$ mentioned above has the following properties, viz.
\[\sup_{|x|<1}C_x<\infty,\quad \int_{(0<|x|<1)}C_x^2\, \nu(dx)<\infty.\]
\item\label{F3} $\sup_{\omega\in\Omega,|x|<1}|F(\omega,0,x)|<\infty$ and  $\sup_{\omega\in\Omega}\int_{(0<|x|<1)} |F(\omega,0,x)|^2\,\nu(dx)<\infty$. \end{enumerate}

\begin{enumerate}[label=\textbf{(G\arabic*)},ref=\textbf{(G\arabic*)}]
\item\label{G1} The mapping $y\rightarrow G(\omega,y,x)$ is continuous for all $x \in \oo(0,1)^c$ and $\omega \in \Omega$.
\item\label{G2} For every bounded set $\K$ in $\Sc_{-p}$,
\[\sup_{\substack{\omega\in\Omega, y \in \K,\\ x \in O(0,1)^c}} |G(\omega,y,x)| < \infty.
\]
\end{enumerate}

\begin{example}
Examples of coefficient $F$ satisfying \ref{F1}, \ref{F2} and \ref{F3} can be constructed as follows. Choose a function $h:\Omega \to \R$ which is bounded and $\F_0$ measurable. Next choose Borel measurable $f_1:\oo(0,1)\to\R$ with $f_1\in {\mathcal L^2}(\oo(0,1), \nu)\cap {\mathcal L^\infty}$. Fix $\gamma_1,\cdots,\gamma_d \in \Sc_{p + \frac{1}{2}}$ and consider the function $f_2: \Sc_{-p} \to \R^d$ defined by $f_2(y) := (\inpr{\gamma_1}{y},\cdots,\inpr{\gamma_d}{y})^t$. Note that $\inpr{\gamma_1}{y}$ etc. are duality actions, since $\Sc_{-p} \subset \Sc_{- p - \frac{1}{2}} \cong (\Sc_{p + \frac{1}{2}})^\prime$ and hence $f_2$ is Lipschitz in the $\|\cdot\|_{-p-\frac{1}{2}}$ norm. Then the function $F(\omega,y,x) := h(\omega) f_1(x) f_2(y)$ satisfies the required assumptions. Examples of coefficient $G$ satisfying \ref{G1} and \ref{G2} can be constructed as follows. Take any bounded Borel measurable function $g_1:\oo(0,1)^c\to\R$ and let $h$ be as above. Fix $\gamma_1,\cdots,\gamma_d \in \Sc_p$ and consider the function $g_2: \Sc_{-p}\to\R^d$ defined by $g_2(y):= (\inpr{\gamma_1}{y},\cdots,\inpr{\gamma_d}{y})^t$. Then the function $G(\omega,y,x) := h(\omega) g_1(x) g_2(y)$ satisfies the required assumptions. Finite linear combinations of such functions are also examples of $F$ and $G$.
\end{example}

We also require certain Lipschitz regularity of the coefficients of \eqref{fd-sde-sln}. For the sake of convenience, we state the hypothesis here.

(Locally Lipschitz in $z$, locally in $y$) For every bounded set $\K$ in $\Sc_{-p}$ and positive integer $n$ there exists a constant $C(\K,n)>0$ such that for all $z_1, z_2\in \oo(0,n),\ y\in \K$ and $\omega\in\Omega$
\begin{equation}\label{loc-Lip}\tag*{\textbf{(loc-Lip)}}
\begin{split}
&|\bar{b}(\omega,z_1;y) - \bar{b}(\omega,z_2;y)|^2+ |\bar{\sigma}(\omega,z_1;y)-
\bar{\sigma}(\omega,z_2;y)|^2\\
&+\int_{(0 < |x| < 1)}|\bar{F}(\omega,z_1,x;y) - \bar{F}(\omega,z_2,x;y)|^2 \, \nu(dx) \leq C(\K,n)\,
|z_1
- z_2|^2.
\end{split}
\end{equation}

We now prove boundedness properties of the coefficient $F$ which follow from our hypotheses.

\begin{lemma}\label{f-bd}
Let \ref{F1}, \ref{F2} and \ref{F3} hold. Then, for any bounded set $\K$ in $\Sc_{-p}$ the following are true.
\begin{enumerate}[label=(\roman*)]
\item $\sup_{\omega\in\Omega,y\in \K,|x|<1}|F(\omega,y,x)|<\infty$.
\item $\sup_{\omega\in\Omega,y\in \K}\int_{(0<|x|<1)}|F(\omega,y,x)|^2\nu(dx) =: \alpha(\K) <\infty$.
\item $\sup_{\omega\in\Omega,y\in \K}\int_0^t\int_{(0<|x|<1)}|F(\omega,y,x)|^4\nu(dx)ds<\infty$ for all $0\leq t<\infty$.
\end{enumerate}
\end{lemma}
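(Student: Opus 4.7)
The plan is to exploit the Lipschitz estimate \ref{F1} together with the pointwise and integral bounds on $C_x$ and $F(\omega,0,x)$ from \ref{F2} and \ref{F3}. Since $\K$ is a bounded set in $\Sc_{-p}$, set $M := \sup_{y \in \K} \|y\|_{-p} < \infty$. The workhorse inequality, obtained from \ref{F1} by the triangle inequality, is
\begin{equation*}
|F(\omega,y,x)| \leq |F(\omega,y,x) - F(\omega,0,x)| + |F(\omega,0,x)| \leq C_x M + |F(\omega,0,x)|
\end{equation*}
valid for all $\omega \in \Omega$, $y \in \K$, and $|x| < 1$. Each of the three items will follow by taking suprema or integrals on both sides and invoking the appropriate parts of \ref{F2}, \ref{F3}.

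For (i), I would take the supremum over $\omega, y \in \K, |x| < 1$ directly in the displayed inequality above. The first term is bounded by $M \sup_{|x|<1} C_x < \infty$ by \ref{F2}, and the second by $\sup_{\omega, |x|<1} |F(\omega,0,x)| < \infty$ by \ref{F3}. For (ii), square the pointwise estimate using $(a+b)^2 \leq 2a^2 + 2b^2$, obtaining $|F(\omega,y,x)|^2 \leq 2 C_x^2 M^2 + 2|F(\omega,0,x)|^2$, and integrate against $\nu$ on $\{0<|x|<1\}$. The first summand integrates to a finite quantity by the $\mathcal{L}^2(\nu)$ bound in \ref{F2}, and the second is uniformly finite in $\omega$ by the second part of \ref{F3}.

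For (iii), I would raise the same pointwise estimate to the fourth power, yielding $|F(\omega,y,x)|^4 \leq 8 C_x^4 M^4 + 8 |F(\omega,0,x)|^4$. The key observation is that neither $\int C_x^4 \, \nu(dx)$ nor $\int |F(\omega,0,x)|^4 \, \nu(dx)$ is hypothesised directly, but both can be recovered by interpolating the pointwise $\mathcal{L}^\infty$ bound and the $\mathcal{L}^2(\nu)$ bound: explicitly,
\begin{equation*}
\int_{(0<|x|<1)} C_x^4 \, \nu(dx) \leq \Big(\sup_{|x|<1} C_x\Big)^2 \int_{(0<|x|<1)} C_x^2 \, \nu(dx) < \infty,
\end{equation*}
and analogously for $|F(\omega,0,x)|$, where the uniform $\omega$-bound in \ref{F3} ensures the resulting constant does not depend on $\omega$. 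The integrand in (iii) is therefore $\nu$-integrable with a bound independent of $\omega, y, s$, and a further multiplication by $t$ gives the desired finite supremum.

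There is no serious obstacle here; the only point that needs a moment of care is part (iii), where one must notice that the combination of an $\mathcal{L}^\infty$ and an $\mathcal{L}^2(\nu)$ bound is enough to promote to $\mathcal{L}^4(\nu)$, so no additional hypothesis on $F$ beyond \ref{F1}--\ref{F3} is required.
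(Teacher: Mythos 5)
Your proof is correct and follows essentially the same route as the paper: the triangle-inequality estimate $|F(\omega,y,x)|\leq C_x\|y\|_{-p}+|F(\omega,0,x)|$, squared and integrated for (ii), is exactly the paper's argument. For (iii) the paper simply combines (i) and (ii) via $|F|^4\leq(\sup|F|)^2|F|^2$, which is the same $\mathcal{L}^\infty$--$\mathcal{L}^2$ interpolation you carry out termwise on $C_x$ and $F(\omega,0,x)$.
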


\begin{proof}
For $y\in\Sc_{-p}$,
\begin{equation}\label{asm2}
|F(\omega,y,x)| \leq|F(\omega,y,x)-F(\omega,0,x)|+|F(\omega,0,x)| \leq C_x\|y\|_{-p}+|F(\omega,0,x)|.
\end{equation}
Then, for any bounded set $\K$ in $\Sc_{-p}$,
\begin{equation}\label{con1}
\sup_{\omega\in\Omega,y\in \K,|x|<1}|F(\omega,y,x)|\leq (\sup_{|x|<1}C_x)\cdot(\sup_{y\in \K}\|y\|_{-p})+\sup_{\omega\in\Omega,|x|<1}|F(\omega,0,x)| <\infty.
\end{equation}
Now,
\begin{equation}\label{asm3}
|F(\omega,y,x)|^2\leq 2C_x^2\|y\|^2_{-p}+2|F(\omega,0,x)|^2.
\end{equation}
Then for $y\in\K$,
\begin{equation}\label{con2}
\begin{split}
&\int_{(0<|x|<1)}|F(\omega,y,x)|^2\nu(dx)\\
&\leq 2\sup_{y\in \K}\|y\|^2_{-p}\int_{(0<|x|<1)}C_x^2\nu(dx)+2\int_{(0<|x|<1)}|F(\omega,0,x)|^2\nu(dx).
\end{split}
\end{equation}
From \eqref{con2}, (ii) follows.
Combining part (i) and (ii), (iii) follows. This completes the proof.
\end{proof}

Using the continuity result in Proposition \ref{tau-x-estmte} the next result follows.

\begin{lemma}
Suppose \ref{G1} holds. Then the map $z \in \R^d \to \bar G(\omega, z,x; \xi(\omega)) = G(\omega,\tau_z\xi(\omega),x) \in \R^d$ is continuous for all $x \in \oo(0,1)^c$ and $\omega \in \Omega$.
\end{lemma}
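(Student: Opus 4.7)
The plan is to recognize the map in question as a composition of two continuous maps, one of which is supplied by Proposition \ref{tau-x-estmte}(c) and the other by hypothesis \ref{G1}. Fix $\omega \in \Omega$ and $x \in \oo(0,1)^c$. Write
\[
z \in \R^d \;\xmapsto{\;\Phi\;}\; \tau_z \xi(\omega) \in \Sc_{-p} \;\xmapsto{\;\Psi\;}\; G(\omega, \tau_z \xi(\omega), x) \in \R^d,
\]
where $\Phi(z) := \tau_z \xi(\omega)$ and $\Psi(y) := G(\omega, y, x)$.

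First, apply Proposition \ref{tau-x-estmte}(c) with the tempered distribution $\xi(\omega) \in \Sc_{-p}$ (a fixed element of $\Sc_{-p}$ once $\omega$ is fixed) to conclude that $\Phi: \R^d \to \Sc_{-p}$ is continuous. Next, by hypothesis \ref{G1}, $\Psi: \Sc_{-p} \to \R^d$ is continuous. The composition $\Psi \circ \Phi$ is therefore continuous from $\R^d$ to $\R^d$, which is precisely the asserted continuity of $z \mapsto \bar G(\omega, z, x; \xi(\omega))$. There is no real obstacle here; the only thing to note is that $\xi$ is $\F_0$-measurable and $\Sc_{-p}$-valued, so $\xi(\omega)$ is a genuine element of $\Sc_{-p}$ for each $\omega$, which is exactly what is needed to invoke Proposition \ref{tau-x-estmte}(c).
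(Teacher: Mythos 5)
Your proof is correct and is exactly the argument the paper has in mind: the paper states the lemma immediately after remarking that it follows from the continuity result in Proposition \ref{tau-x-estmte}, i.e.\ from composing $z \mapsto \tau_z\xi(\omega)$ (continuous by Proposition \ref{tau-x-estmte}(c) applied to the fixed element $\xi(\omega) \in \Sc_{-p}$) with $y \mapsto G(\omega,y,x)$ (continuous by \ref{G1}). Nothing is missing.
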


\subsection{Global Lipschitz coefficients}\label{sec:2-2}
We first consider the existence and uniqueness of solutions for the reduced equation, viz.
\begin{equation}\label{reduced-fd-sde}
\begin{split}
dU_{t} &=\bar b(U_{t-};\xi)dt+ \bar\sigma(U_{t-};\xi)\cdot dB_t +\int_{(0 < |x| < 1)}  \bar F(U_{t-},x;\xi)\, \widetilde
N(dtdx), \quad t\geq 0 \\
U_0&=\kappa;
\end{split}
\end{equation}
with $\xi$ and $\kappa$ as in \eqref{fd-sde-sln}.

\begin{theorem}[{\cite{fd-SDE}}]
\label{nrm-bd-rndm-inl}
Let \ref{sigma-b}, \ref{F1}, \ref{F2} and \ref{F3} hold. Suppose the following conditions are satisfied.
\begin{enumerate}[label=(\roman*)]
\item $\kappa, \xi$ are $\F_0$ measurable, as stated in \eqref{fd-sde-sln}.
\item (Global Lipschitz in $z$, locally in $y$) For every bounded set $\K$ in $\Sc_{-p}$, there exists a constant $C(\K)>0$ such that for all $z_1, z_2\in\R^d,\ y\in \K$ and $\omega\in\Omega$
\begin{equation}\label{Lipschitz-condition-rnm-inl}
\begin{split}
&|\bar{b}(\omega,z_1;y) - \bar{b}(\omega,z_2;y)|^2+ |\bar{\sigma}(\omega,z_1;y)-
\bar{\sigma}(\omega,z_2;y)|^2\\
&+\int_{(0 < |x| < 1)}|\bar{F}(\omega,z_1,x;y) - \bar{F}(\omega,z_2,x;y)|^2 \, \nu(dx) \leq C(\K)\,
|z_1
- z_2|^2.
\end{split}
\end{equation}

\end{enumerate}
Then \eqref{reduced-fd-sde} has an $(\F_t)$ adapted strong solution $\{X_t\}$ with rcll paths. Pathwise uniqueness of solutions also holds, i.e. if $\{X^1_t\}$ is another such solution, then $P(X_t=X_t^1,t\geq0)=1$.
\end{theorem}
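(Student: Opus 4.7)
The plan is to follow the classical contraction-mapping and a priori estimate strategy for SDEs with jumps, combined with a localization step to handle the $\F_0$-measurable coefficients induced by $\xi$. Since $\xi$ is $\F_0$-measurable, working on the events $\Omega_M := \{\|\xi\|_{-p} \leq M\} \cap \{|\kappa| \leq M\}$ for $M \in \N$ replaces the random Lipschitz constant in hypothesis (ii) by the uniform deterministic constant $C(\K_M)^{1/2}$ associated with the bounded set $\K_M := \{y \in \Sc_{-p} : \|y\|_{-p} \leq M\}$. Because $\Omega_M \uparrow \Omega$ and both $\xi, \kappa$ are $\F_0$-measurable, it is enough to construct a solution on each $\Omega_M$ and then patch the pieces via pathwise uniqueness.

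Before iterating I would verify linear growth in $z$ uniformly on $\Omega_M$. Using \ref{sigma-b} and the Lipschitz hypothesis one gets
\[
|\bar b(\omega,z;\xi)|^2 \leq 2|\bar b(\omega,0;\xi)|^2 + 2 C(\K_M)|z|^2 \leq 2 d\beta^2 M^2 + 2 C(\K_M)|z|^2
\]
on $\Omega_M$, and similarly for $\bar\sigma$, while Lemma \ref{f-bd}(ii) combined with the Lipschitz hypothesis yields $\int_{(0<|x|<1)}|\bar F(\omega,z,x;\xi)|^2\,\nu(dx) \leq 2\alpha(\K_M) + 2 C(\K_M)|z|^2$. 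I would then run a Picard iteration: set $U^{(0)}_t := \kappa$ and define $U^{(n+1)}_t$ by substituting $U^{(n)}_{s-}$ into the right-hand side of \eqref{reduced-fd-sde}, working in the Banach space of $(\F_t)$-adapted c\`adl\`ag $\R^d$-valued processes on $[0,T]$ equipped with the seminorm $\Exp[\indicator{\Omega_M}\sup_{t \leq T}|X_t|^2]^{1/2}$. The Burkholder-Davis-Gundy inequality for the Brownian part, the $L^2$-isometry for the compensated jump integral, and Cauchy-Schwarz for the drift produce, in the standard way, a contraction estimate of the form
\[
\Exp\bigl[\indicator{\Omega_M}\sup_{s \leq t}|U^{(n+1)}_s - U^{(n)}_s|^2\bigr] \leq K_M \int_0^t \Exp\bigl[\indicator{\Omega_M}\sup_{r \leq s}|U^{(n)}_r - U^{(n-1)}_r|^2\bigr]\,ds,
\]
whose iteration gives a Cauchy sequence converging to a strong solution $U^{(M)}$ of \eqref{reduced-fd-sde} on $\Omega_M$. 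Since $\Omega_M \subset \Omega_{M+1}$, pathwise uniqueness (established next) forces $U^{(M+1)} = U^{(M)}$ a.s.\ on $\Omega_M$, and patching yields a single global process $\{U_t\}$.

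Pathwise uniqueness itself is shown by applying the classical It\^o formula to $|X_t - X^1_t|^2$ for two candidate solutions, stopping at a localizing sequence $\tau_k \uparrow \infty$ that turns the Brownian and compensated-Poisson stochastic integrals into true martingales, taking expectation, and bounding every finite-variation term via the Lipschitz hypothesis to obtain, on $\Omega_M$,
\[
\Exp\bigl[\indicator{\Omega_M}|X_{t \wedge \tau_k} - X^1_{t \wedge \tau_k}|^2\bigr] \leq C_M \int_0^t \Exp\bigl[\indicator{\Omega_M}|X_{s \wedge \tau_k} - X^1_{s \wedge \tau_k}|^2\bigr]\,ds,
\]
whence Gronwall's inequality and $k \to \infty$ conclude. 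The main technical nuisance I expect is the compensated Poisson integral, which enters both the Picard contraction estimate and the uniqueness Gronwall bound: each use rests on the $L^2$-isometry $\Exp[(\int_0^t\int \bar F(U_{s-},x;\xi)\,\widetilde N(dsdx))^2] = \Exp\int_0^t\int |\bar F(U_{s-},x;\xi)|^2\,\nu(dx)\,ds$ together with a verification of predictability of the integrands, the latter following from the measurability hypotheses on $F$ and the continuity of $z \mapsto \tau_z \xi$ given by Proposition \ref{tau-x-estmte}(c).
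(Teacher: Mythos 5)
The paper does not actually prove Theorem \ref{nrm-bd-rndm-inl} here; it cites \cite{fd-SDE} and states in the introduction that the argument uses the standard techniques of \cite{MR2512800, MR2560625}, which is precisely what you propose: localize on the $\F_0$-measurable events $\{\|\xi\|_{-p}\leq M\}\cap\{|\kappa|\leq M\}$ to freeze the random Lipschitz constant (the same indicator trick the paper uses in Theorem \ref{random-initial}), then run Picard iteration with BDG/Doob and the $\widetilde N$-isometry, and prove pathwise uniqueness by It\^o plus Gronwall with a localizing sequence. Your outline is correct and is essentially the intended (deferred) proof.
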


We now consider the SDE \eqref{fd-sde-sln}. The next result follows by the interlacing technique (see \cite[Example 1.3.13, pp. 50-51]{MR2512800}). The arguments run similar to \cite[Theorem 6.2.9]{MR2512800}.

\begin{theorem}[{\cite{fd-SDE}}]\label{interlacing-global-sde}
Suppose all the assumptions of Theorem \ref{nrm-bd-rndm-inl} hold. In addition, assume that \ref{G1} holds. Then there exists a unique rcll adapted solution to \eqref{fd-sde-sln}.
\end{theorem}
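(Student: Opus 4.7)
The plan is to build the solution by the interlacing technique so as to reduce matters to the reduced SDE \eqref{reduced-fd-sde} already handled by Theorem \ref{nrm-bd-rndm-inl}. The key observation is that the Poisson random measure $N$ restricted to $[0,\infty)\times\oo(0,1)^c$ has finite intensity $\nu(\oo(0,1)^c) < \infty$ on every bounded time interval, since $\nu$ is a L\'evy measure. Hence the large-jump times form a strictly increasing sequence of $(\F_t)$-stopping times $0 = T_0 < T_1 < T_2 < \cdots$ with $T_n \uparrow \infty$ almost surely, and at each $T_n$ the associated mark $\Delta Z_n \in \oo(0,1)^c$ is $\F_{T_n}$-measurable.

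I would construct the solution inductively on the random intervals $[T_n, T_{n+1})$. On $[0,T_1)$, let $\{X^{(0)}_t\}$ be the unique rcll strong solution to the reduced SDE \eqref{reduced-fd-sde} with initial condition $\kappa$, whose existence and pathwise uniqueness are guaranteed by Theorem \ref{nrm-bd-rndm-inl}. Set $U_t := X^{(0)}_t$ for $t < T_1$ and $U_{T_1} := X^{(0)}_{T_1-} + \bar G(U_{T_1-}, \Delta Z_1; \xi)$, which is well defined in $\R^d$ by \ref{G1}. For the inductive step, having constructed $U_t$ on $[0,T_n]$, consider the shifted Brownian motion $\tilde B_t := B_{T_n + t} - B_{T_n}$ and shifted Poisson random measure $\tilde N(ds\,dx) := N((T_n + ds)\,dx)$ restricted to $\oo(0,1)$; by the strong Markov property these are independent of $\F_{T_n}$, and relative to the filtration $\tilde{\F}_t := \F_{T_n + t}$, Theorem \ref{nrm-bd-rndm-inl} applies with $\F_0$ replaced by $\tilde{\F}_0 = \F_{T_n}$ and initial condition $U_{T_n}$ (which is $\tilde{\F}_0$-measurable). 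Solving the reduced SDE yields $\{X^{(n)}_t\}$ on $[0, T_{n+1} - T_n)$, and I define $U_{T_n + t} := X^{(n)}_t$ on this interval and $U_{T_{n+1}} := X^{(n)}_{(T_{n+1}-T_n)-} + \bar G(U_{T_{n+1}-}, \Delta Z_{n+1}; \xi)$. Since $T_n \uparrow \infty$, this defines $U$ on $[0,\infty)$.

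Verifying that the resulting $\{U_t\}$ solves \eqref{fd-sde-sln} amounts to splitting the stochastic integrals at the $T_n$'s: on each interval the reduced SDE equation holds by construction, while the large-jump integral $\int_0^t \int_{|x|\geq 1} \bar G(U_{s-},x;\xi)\, N(ds\,dx)$ reduces to the finite sum $\sum_{n: T_n \leq t} \bar G(U_{T_n-}, \Delta Z_n; \xi)$, exactly accounting for the jumps added at each $T_n$. Adaptedness and the rcll property are inherited stage by stage. For pathwise uniqueness, if $\{U_t\}$ and $\{U^1_t\}$ are two rcll adapted solutions, then on $[0,T_1)$ they both satisfy the reduced SDE with the same initial condition $\kappa$, so by the uniqueness part of Theorem \ref{nrm-bd-rndm-inl} they agree on $[0,T_1)$; at $T_1$ they receive the same jump $\bar G(U_{T_1-}, \Delta Z_1; \xi)$; and the argument iterates on $[T_n, T_{n+1})$.

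The main obstacle is the restart step: Theorem \ref{nrm-bd-rndm-inl} is formulated with $\F_0$-measurable initial data, whereas $U_{T_n}$ is only $\F_{T_n}$-measurable. Handling this cleanly requires invoking the strong Markov property to transfer the problem to the shifted filtration $\tilde{\F}_t = \F_{T_n+t}$, for which $U_{T_n}$ is $\tilde{\F}_0$-measurable and $\tilde B, \tilde N$ (restricted to small jumps) have the right independence. Once this reduction is justified, the remaining content is routine bookkeeping across the stopping times $T_n$.
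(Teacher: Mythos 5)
Your proposal is correct and follows exactly the route the paper indicates: the paper gives no proof here, deferring to \cite{fd-SDE} and stating that the result ``follows by the interlacing technique'' with arguments as in \cite[Example 1.3.13, Theorem 6.2.9]{MR2512800}, which is precisely your construction of interlacing the reduced-equation solutions of Theorem \ref{nrm-bd-rndm-inl} at the a.s.\ finite large-jump times $T_n\uparrow\infty$. Your identification of the restart step (handling the $\F_{T_n}$-measurable initial condition via the strong Markov property and the shifted filtration) as the only non-routine point is also the standard resolution in that reference.
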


\subsection{Local Lipschitz coefficients}\label{sec:2-3}
Let $\widehat{\R^d}:=\R^d\cup\{\infty\}$ be the one point compactification of $\R^d$. The next result is an extension of Theorem \ref{interlacing-global-sde} for `global Lipschitz' coefficients to `local Lipschitz' coefficients.

\begin{theorem}[{\cite{fd-SDE}}]
\label{nrm-sqre-rndm-inl-fnl}
Let \ref{sigma-b}, \ref{F1}, \ref{F2}, \ref{F3}, \ref{loc-Lip} and \ref{G1} hold. Then there exists an $(\F_t)$ stopping time $\eta$ and an $(\F_t)$ adapted $\widehat{\R^d}$ valued process $\{X_t\}$ with rcll paths such that $\{X_t\}$ solves \eqref{fd-sde-sln} upto time $\eta$ and $X_t=\infty$ for $t\geq\eta$. Further $\eta$ can be identified as follows: $\eta=\lim_m\theta_m$ where $\{\theta_m\}$ are $(\F_t)$ stopping times defined by $\theta_m:=\inf\{t\geq0:|X_t|\geq m\}$. This is also pathwise unique in this sense: if $(\{X'_t\},\eta')$ is another such solution, then $P(X_t=X'_t, 0\leq t<\eta\wedge\eta')=1$.
\end{theorem}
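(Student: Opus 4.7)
The plan is a truncation-and-patching argument that reduces the local Lipschitz setting to the global Lipschitz result of Theorem \ref{interlacing-global-sde}.

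For each $m \in \N$, let $\pi_m : \R^d \to \overline{\oo(0,m)}$ denote the $1$-Lipschitz radial projection, given by $\pi_m(z) := z$ for $|z| \leq m$ and $\pi_m(z) := m z/|z|$ for $|z| > m$. Truncate the coefficients by setting $\bar b^{(m)}(\omega,z;y) := \bar b(\omega, \pi_m(z); y)$ and analogously $\bar\sigma^{(m)}, \bar F^{(m)}, \bar G^{(m)}$; these coincide with the original coefficients on $\oo(0,m)$. The hypotheses \ref{sigma-b}, \ref{F1}, \ref{F2}, \ref{F3} and \ref{G1} constrain the $y$-dependence of $b, \sigma, F, G$ only, and so are unaffected by the $z$-truncation, while \ref{loc-Lip} together with $|\pi_m(z_1) - \pi_m(z_2)| \leq |z_1 - z_2|$ upgrades the local Lipschitz condition to the global Lipschitz (in $z$, locally in $y$) condition of Theorem \ref{nrm-bd-rndm-inl} with constant $C(\K, m+1)$. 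An application of Theorem \ref{interlacing-global-sde} then yields, for each $m$, a unique rcll $(\F_t)$ adapted global solution $\{X^{(m)}_t\}$ of the $m$-th truncated SDE.

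Set $\theta_m := \inf\{t \geq 0 : |X^{(m)}_t| \geq m\}$. Since $|X^{(m)}_{t-}| \leq m$ on $[0, \theta_m]$ and $\pi_m$ is the identity on $\overline{\oo(0,m)}$, the process $\{X^{(m)}_t\}$ actually satisfies \eqref{fd-sde-sln} on $[0, \theta_m)$. For $m < n$, the $m$- and $n$-truncated coefficients agree on $\overline{\oo(0,m)}$, so both $X^{(m)}$ and $X^{(n)}$ stopped at $\theta_m^{(n)} := \inf\{t : |X^{(n)}_t| \geq m\}$ solve the same $m$-truncated SDE on the corresponding interval; pathwise uniqueness from Theorem \ref{interlacing-global-sde} forces $X^{(m)} \equiv X^{(n)}$ on $[0, \theta_m \wedge \theta_m^{(n)})$ and in particular $\theta_m = \theta_m^{(n)}$ a.s. Set $\eta := \lim_m \theta_m$, and define $X_t := X^{(m)}_t$ whenever $t < \theta_m$ (any sufficiently large $m$), with $X_t := \infty$ for $t \geq \eta$. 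The resulting $\{X_t\}$ is rcll $(\F_t)$ adapted with values in $\widehat{\R^d}$, solves \eqref{fd-sde-sln} on $[0, \eta)$, and by construction $\theta_m = \inf\{t : |X_t| \geq m\}$, giving $\eta = \lim_m \theta_m$. Pathwise uniqueness for $(\{X_t\}, \eta)$ is handled by the same comparison: for any other solution $(\{X'_t\}, \eta')$ with $\theta'_m := \inf\{t : |X'_t| \geq m\}$, both $X$ and $X'$ solve the $m$-truncated SDE on $[0, \theta_m \wedge \theta'_m)$ and hence agree there; letting $m \to \infty$ yields the claim.

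The main conceptual obstacle is that the truncated coefficients $\bar b^{(m)}, \bar\sigma^{(m)}, \bar F^{(m)}, \bar G^{(m)}$ are no longer of the distinguished convolution form $\inpr{b}{\tau_z y}$ or $F(\tau_z y, x)$, so one must check that they still fit the framework of Theorem \ref{interlacing-global-sde}. The essential observation is that \ref{F1}--\ref{F3} and \ref{G1} constrain $F$ and $G$ only in the $y$-variable, independently of how $y$ is produced from $z$ and $\xi$, so they are preserved by the $z$-truncation, while the required global Lipschitz regularity in $z$ is provided by the $1$-Lipschitz property of $\pi_m$. A secondary technical point is that a single large jump of $N$ can carry $X^{(m)}$ across $\partial\oo(0,m)$, so $\theta_m$ is in general a jump time, and one must be careful to distinguish $X^{(m)}_{\theta_m-}$ from $X^{(m)}_{\theta_m}$ both in the consistency argument and in the final identification of $\eta$.
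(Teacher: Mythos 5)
Your truncation-and-patching strategy is the standard localization argument and is, in outline, correct; note that the paper itself gives no proof of Theorem \ref{nrm-sqre-rndm-inl-fnl} but defers entirely to the companion preprint \cite{fd-SDE}, describing the techniques there as standard, which is exactly the reduction to the global Lipschitz case that you carry out. The one genuinely delicate point you identify yourself but do not resolve: Theorems \ref{nrm-bd-rndm-inl} and \ref{interlacing-global-sde} are stated only for coefficients of the distinguished form $\bar b(\omega,z;y)=\inpr{b(\omega)}{\tau_z y}$, $\bar F(\omega,z,x;y)=F(\omega,\tau_z y,x)$, and your truncated coefficients $\bar b(\omega,\pi_m(z);y)$ are not of this form for any admissible choice of $b$. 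Invoking those theorems as black boxes is therefore not literally legitimate; you must either check that their proofs (Picard iteration plus interlacing) use only the quantitative hypotheses --- the global Lipschitz bound in $z$ locally in $y$, the bounds of Lemma \ref{f-bd}, and \ref{G1} --- or restate them for general coefficients satisfying these hypotheses. That is almost certainly true, but it is a claim about the proofs in \cite{fd-SDE}, not about their statements, and it should be made explicit.

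The second gap is in the consistency and uniqueness steps. You deduce $X^{(m)}\equiv X^{(n)}$ on $[0,\theta_m\wedge\theta_m^{(n)})$ ``by pathwise uniqueness from Theorem \ref{interlacing-global-sde}'', but the uniqueness asserted there is for global solutions on $[0,\infty)$, and $X^{(n)}$ stopped at $\theta_m^{(n)}$ is not a global solution of the $m$-truncated equation (it is frozen after the stopping time). The same issue recurs in your final uniqueness claim, where $X$ and $X'$ are only defined up to $\eta\wedge\eta'$. What is needed is a local uniqueness lemma: for instance, interlace out the finitely many large jumps on a bounded interval and, between them, run a Gronwall estimate on
\[
\Exp\,\bigl|X^{(m)}_{t\wedge\theta}-X^{(n)}_{t\wedge\theta}\bigr|^2,\qquad \theta:=\theta_m\wedge\theta_m^{(n)},
\]
using the It\^o isometry for the $\widetilde N$-integral and the Lipschitz bound of the truncated coefficients; boundedness of the stopped processes makes these expectations finite. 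This is routine, but it is a genuine step and not a corollary of the stated global uniqueness, so it should appear in the proof rather than be absorbed into the word ``forces''. With these two points filled in, the argument is complete and matches the intended one.
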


Using Proposition \ref{ext-tau}, we now give explicit regularity assumptions on $\sigma, b, F$ which imply the `local Lipschitz' regularity \ref{loc-Lip} of $\bar \sigma, \bar b, \bar F$. The argument here is a variant of \cite[Proposition 3.8]{MR3687773}. For the sake of convenience, we state the result with deterministic $\sigma, b$ and $F$.

\begin{proposition}\label{f,bar-f}
Let $p>d+\frac{1}{2}$. Fix deterministic $b_i, \sigma_{ij} \in \Sc_{p+\tfrac{1}{2}}, 1 \leq i, j \leq d$ and $y \in \Sc_{-p}$. Assume \ref{F1} and \ref{F2}. Then for any positive integer $n$, there exists a constant $D_n>0$ such that for all $z_1,z_2\in \overline{\oo(0,n)}$ and $0<|x|<1$ \begin{equation}\label{f-bar1}
\begin{split}
&|\bar{b}(z_1;y) - \bar{b}(z_2;y)| \leq \|y\|_{-p} D_n\sup_i\|b_i\|_{p+\frac{1}{2}}\,
|z_1
- z_2|,\\
&|\bar{\sigma}(z_1;y)-
\bar{\sigma}(z_2;y)| \leq \|y\|_{-p} D_n\sup_{i,j}\|\sigma_{ij}\|_{p+\frac{1}{2}}\,
|z_1
- z_2|,\\
&|\bar{F}(z_1,x;y)-\bar{F}(z_2,x;y)|\leq C_x D_n \|y\|_{-p}|z_1-z_2|.
\end{split}
\end{equation}
In particular \ref{loc-Lip} follows, i.e. for any bounded set $\K$ in $\Sc_{-p}$ and any positive integer $n$, there exists a constant $D(\K,n)>0$ such that for all $z_1,z_2\in \overline{\oo(0,n)}$ and $y \in \K$,
\begin{equation}
\begin{split}
&|\bar{b}(z_1;y) - \bar{b}(z_2;y)| \leq D(\K,n)\,
|z_1
- z_2|,\\
&|\bar{\sigma}(z_1;y)-
\bar{\sigma}(z_2;y)| \leq D(\K,n)\,
|z_1
- z_2|,\\
&\int_{(0 < |x| < 1)}|\bar{F}(z_1,x;y) - \bar{F}(z_2,x;y)|^2 \, \nu(dx) \leq D(\K,n)\,
|z_1
- z_2|^2.
\end{split}
\end{equation}
\end{proposition}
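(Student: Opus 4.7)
The plan is to reduce all three estimates in \eqref{f-bar1} to the Lipschitz-in-translation estimate of Proposition \ref{ext-tau}, using duality to shift the translation from the $\Sc_{-p}$ argument (where we have no direct control) onto elements of $\Sc_{p+\tfrac{1}{2}}$ (where Proposition \ref{ext-tau} applies).

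First I would handle $\bar b$ and $\bar\sigma$. By definition $\bar b_i(z;y) = \inpr{b_i}{\tau_z y}$, and the definition of $\tau_z$ on $\Sc^\prime$ gives $\inpr{b_i}{\tau_z y} = \inpr{\tau_{-z} b_i}{y}$. Therefore
\[
|\bar b_i(z_1;y) - \bar b_i(z_2;y)| = |\inpr{\tau_{-z_1} b_i - \tau_{-z_2} b_i}{y}| \leq \|y\|_{-p}\,\|\tau_{-z_1} b_i - \tau_{-z_2} b_i\|_p.
\]
Since $b_i \in \Sc_{p+\tfrac{1}{2}}$ and $|{-z_1}|, |{-z_2}| \leq n$, Proposition \ref{ext-tau} yields $\|\tau_{-z_1} b_i - \tau_{-z_2} b_i\|_p \leq D(n)\|b_i\|_{p+\tfrac{1}{2}} |z_1-z_2|$, giving the first inequality of \eqref{f-bar1} after taking the Euclidean norm over components. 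The same argument with $\sigma_{ij}$ in place of $b_i$ gives the second inequality.

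Next I would treat $\bar F$. By \ref{F1},
\[
|\bar F(z_1,x;y) - \bar F(z_2,x;y)| = |F(\tau_{z_1} y, x) - F(\tau_{z_2} y, x)| \leq C_x \|\tau_{z_1} y - \tau_{z_2} y\|_{-p-\tfrac{1}{2}}.
\]
To estimate the latter norm, I would use duality: $\|\tau_{z_1} y - \tau_{z_2} y\|_{-p-\tfrac{1}{2}} = \sup_{\|\phi\|_{p+\tfrac{1}{2}} \leq 1} |\inpr{\tau_{z_1} y - \tau_{z_2} y}{\phi}|$. For each such $\phi$,
\[
|\inpr{\tau_{z_1} y - \tau_{z_2} y}{\phi}| = |\inpr{y}{\tau_{-z_1}\phi - \tau_{-z_2}\phi}| \leq \|y\|_{-p}\, \|\tau_{-z_1}\phi - \tau_{-z_2}\phi\|_p,
\]
and Proposition \ref{ext-tau} gives $\|\tau_{-z_1}\phi - \tau_{-z_2}\phi\|_p \leq D(n)\|\phi\|_{p+\tfrac{1}{2}} |z_1-z_2| \leq D(n)|z_1-z_2|$. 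Combining yields the third estimate of \eqref{f-bar1} with constant $C_x D(n)\|y\|_{-p}$.

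Finally, for the ``in particular'' statement, I would set $\beta_\K := \sup_{y\in\K}\|y\|_{-p} < \infty$ (since $\K$ is bounded in $\Sc_{-p}$) and define $D(\K,n)$ by taking $D(n)\,\beta_\K$ multiplied by $\sup_i\|b_i\|_{p+\tfrac{1}{2}}$, $\sup_{i,j}\|\sigma_{ij}\|_{p+\tfrac{1}{2}}$ and $\bigl(\int_{(0<|x|<1)} C_x^2\,\nu(dx)\bigr)^{1/2}$ respectively for the three bounds; the integral is finite by \ref{F2}. Squaring the pointwise $\bar F$ estimate and integrating against $\nu$ gives the stated $\Sc_{-p-\tfrac{1}{2}}$-type bound. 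There is no real obstacle here; the only subtlety worth noting is the shift in smoothness index from $-p$ to $-p-\tfrac{1}{2}$ in the $\bar F$ step, which is precisely why \ref{F1} is formulated in the $\|\cdot\|_{-p-\tfrac{1}{2}}$ norm and why we need $b_i, \sigma_{ij} \in \Sc_{p+\tfrac{1}{2}}$ rather than merely $\Sc_p$.
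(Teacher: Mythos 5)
Your proposal is correct and follows essentially the same route as the paper: the $\bar F$ estimate is obtained exactly as in the paper's proof (apply \ref{F1}, express $\|\tau_{z_1}y-\tau_{z_2}y\|_{-p-\frac{1}{2}}$ by duality against test functions of $\|\cdot\|_{p+\frac{1}{2}}$-norm at most one, shift the translations onto the test function, and invoke Proposition \ref{ext-tau}), and your direct duality argument for $\bar b$ and $\bar\sigma$ is precisely the ``similar'' argument the paper leaves to the reader. The bookkeeping for the ``in particular'' statement, using \ref{F2} and $\sup_{y\in\K}\|y\|_{-p}<\infty$, is also as intended.
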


\begin{proof}
We prove the result for $\bar F$. The results for $\bar \sigma, \bar b$ follow similarly.

If $z\in \R^d$ takes values in a bounded set, then using Proposition \ref{tau-x-estmte}, we conclude that corresponding $\tau_z y$ also takes values in some bounded set. Then we have for all $z_1,z_2\in \overline{\oo(0,n)}$,
\begin{align*}
|\bar{F}(z_1,x;y)-\bar{F}(z_2,x;y)|  = & |F(\tau_{z_1}y,x)-F(\tau_{z_2}y,x)|\\
\leq & C_x\|\tau_{z_1}y-\tau_{z_2}y\|_{-p-\frac{1}{2}}\ \ \text{[by \ref{F1}]}\\
=&C_x\sup_{\psi\in\Sc,\ \|\psi\|_{p+\frac{1}{2}}\leq1}|\langle\tau_{z_1}y-\tau_{z_2}y,\psi\rangle|\\
=&C_x\sup_{\psi\in\Sc,\ \|\psi\|_{p+\frac{1}{2}}\leq1}|\langle y,\tau_{-z_1}\psi-\tau_{-z_2}\psi\rangle|\\
\leq& C_x\sup_{\psi\in\Sc,\ \|\psi\|_{p+\frac{1}{2}}\leq1}\|y\|_{-p}\|\tau_{-z_1}\psi-\tau_{-z_2}\psi\|_{p}\\
\leq& C_x \|y\|_{-p} \sup_{\substack{\psi\in\Sc,\\ \|\psi\|_{p+\frac{1}{2}}\leq1}} D_n\|\psi\|_{p+\frac{1}{2}}|z_1-z_2|\ \ \text{[by Proposition \ref{ext-tau}]}\\
\leq&C_x D_n\|y\|_{-p}|z_1-z_2|.
\end{align*}
This proves the inequality for $\bar F$ in \eqref{f-bar1}. The other inequality for $\bar F$ follows from \ref{F2}.
\end{proof}

\section{Infinite dimensional SPDE}\label{sec:3}

We continue with the same notations and hypotheses as in Section~\ref{sec:2}. In this section, we study the existence and uniqueness of strong solutions to the following SPDE, viz.
\begin{equation}\label{Levy-SPDE}
\begin{split}
Y_t
&= \xi + \int_0^t
A(Y_{s-})\cdot dB_s + \int_0^t \widetilde L(Y_{s-})\, ds\\
&+ \int_0^t \int_{(0 < |x| < 1)} \left(\tau_{F(Y_{s-},x)}
-Id\right) \,
Y_{s-}\,\widetilde N(dsdx)\\
&+ \int_0^t \int_{(|x| \geq  1)} \left(\tau_{G(Y_{s-},x)}
-Id\right) \,Y_{s-}\,
N(dsdx),
\end{split}
\end{equation}
where $\xi$ is an $\Sc_{-p}$ valued $\F_0$-measurable random variable, $A = (A_1,\cdots, A_d)$ with $A_j:\Sc_{-p}\rightarrow\Sc_{-p-\frac{1}{2}} \subset \Sc_{-p-1}, j=1,2,\cdots,d$ and $\widetilde L:\Sc_{-p}\rightarrow\Sc_{-p-1}$ are defined as follows, for $\rho\in\Sc_{-p}$
\begin{equation}\label{op-A-L}
\begin{split}
&A_j\rho := -\sum_{i=1}^d  \langle\sigma,\rho\rangle_{ij}\, \partial_i\rho,\\
&\widetilde L(\rho) := L\rho + \int_{(0 < |x| < 1)} \left( \tau_{F(\rho, x)}
-Id +
\sum_{i=1}^d F^i(\rho,x)\, \partial_i\right) \rho\
\nu(dx),\\
&L\rho := \frac{1}{2}
\sum_{i,j=1}^d\big(\langle\sigma, \rho\rangle\langle\sigma, \rho\rangle^t\big)_{ij}\, \partial^2_{ij}\rho -\sum_{i=1}^d
\langle b,\rho\rangle_i\, \partial_i\rho.
\end{split}
\end{equation}

Given an $\Sc_{-p}$ valued adapted process $\{Y_t\}$ with rcll paths, the integrals (with respect to $B, \widetilde N$ and $\nu$) appearing in \eqref{Levy-SPDE} exist. For example, to show the existence of the integral with respect to $\widetilde N$, we need to establish $\Exp \sup_{t \geq 0} \int_0^{t\wedge\pi_n} \int_{(0 < |x| < 1)} \|\left(\tau_{F(Y_{s-},x)}
-Id\right) \,
Y_{s-}\|^2_{-p-1}  \nu(dx) ds < \infty$, for some increasing sequence of stopping times $\{\pi_n\}$ with $\pi_n \uparrow \infty$ a.s.. Now,
\begin{align*}
&\|\left(\tau_{F(Y_{s-},x)}
-Id\right) \,
Y_{s-}\|^2_{-p-1}\\
&= \sum_{m\in\mathbb{Z}^d_+} (2|m|+d)^{-2(p+1)} \left[f(1;F(Y_{s-},x),Y_{s-},m) - f(0;F(Y_{s-},x),Y_{s-},m) \right]^2\\
&= \sum_{m\in\mathbb{Z}^d_+} (2|m|+d)^{-2(p+1)} \left(\int_0^1 f^{\prime}(v;F(Y_{s-},x),Y_{s-},m)\,dv\right)^2,
\end{align*}
where the function $f$ is given by
\[f(v;z,\psi,m) := \inpr{\tau_{vz}\psi}{h_m},\, v\in [0,1],\]
for all fixed $z\in \R^d, \psi \in \Sc_{-p}, m \in \mathbb{Z}^d_+$. Using the fact that
\[f^\prime(v;z,\psi,m) = -\sum_{i=1}^d\inpr{z_i\partial_i\tau_{vz}\psi}{h_m}\]
and Lemma \ref{f-bd}, we can show
\begin{align*}
&\Exp \sup_{t \geq 0} \int_0^{t\wedge\pi_n} \int_{(0 < |x| < 1)} \|\left(\tau_{F(Y_{s-},x)}
-Id\right) \,
Y_{s-}\|^2_{-p-1}  \nu(dx) ds \\
&\leq C(n)\ \Exp \sup_{t \geq 0} \int_0^{t\wedge\pi_n} \int_{(0 < |x| < 1)} |F(Y_{s-},x)|^2  \nu(dx) ds < \infty,
\end{align*}
where, $\pi_n:=\inf\{t: \|Y_t\|_{-p} \geq n\}\wedge n$ and $\{C(n)\}$ denotes a sequence of positive real numbers. We omit the details here and provide the details (see \eqref{2nd-order-Taylor} and \eqref{term2} below) for a similar estimate involving a second order Taylor expansion.

Let $\delta$ be
an arbitrary state, viewed as an isolated point of $\hat\Sc_{-p} :=
\Sc_{-p} \cup \{\delta\}$. We make two definitions
extending \cite[Definition 3.1 and Definition 3.3]{MR3063763}.

\begin{definition}
Let $\xi$ be an $\Sc_{-p}$ valued $\F_0$-measurable random variable. By an $\hat\Sc_{-p}$ valued local strong solution of SPDE
\eqref{Levy-SPDE}, we
mean a pair $(\{Y_t\},\eta)$ where $\eta$ is an $(\F_t)$ stopping time and
$\{Y_t\}$ an $\hat\Sc_{-p}$ valued $(\F_t)$ adapted rcll process
such that
\begin{enumerate}
\item for all $\omega \in \Omega$, the map $Y_{\cdot}(\omega):[0,\eta(\omega))
\to \Sc_{-p}$ is well-defined and $Y_t(\omega) = \delta, \, t \geq
\eta(\omega)$.
\item a.s. the equality \eqref{Levy-SPDE} holds in $\Sc_{-p-1}$ for $0 \leq t <
\eta$.
\end{enumerate}

We say local strong solutions of SPDE \eqref{Levy-SPDE} are unique or
pathwise unique,
if given any
two $\hat\Sc_p(\R^d)$ valued strong solutions $(\{Y_t^1\},\eta^1)$ and
$(\{Y_t^2\},\eta^2)$, we have $P(Y_t^1 = Y_t^2,\, 0 \leq t <
\eta^1\wedge\eta^2)=1$.
\end{definition}

\subsection{Existence of solutions}\label{sec:3-1}
The next few results, viz. Theorem \ref{Levy-SPDE-existence}, Theorem \ref{uniq-via-monotonicity} and Lemma \ref{unqu} establish the existence and uniqueness of equation \eqref{Levy-SPDE} and also exhibit the translation invariance of the solutions.

\begin{theorem}\label{Levy-SPDE-existence}
Let \ref{sigma-b}, \ref{F1}, \ref{F2}, \ref{F3}, \ref{loc-Lip} and \ref{G1} hold. Consider the SDE \eqref{fd-sde-sln} with $\kappa = 0$ and
let $(\{U_t\},\eta)$ denote the unique local strong solution obtained by Theorem \ref{nrm-sqre-rndm-inl-fnl}. Then the $\Sc_{-p}$ valued process
 $\{Y_t\}$ defined by $Y_t := \tau_{U_t}\xi, t < \eta$
solves the SPDE \eqref{Levy-SPDE}. We set $Y_t := \delta, t \geq \eta$ so that $(\{Y_t\}, \eta)$ is a local strong solution of \eqref{Levy-SPDE}.
\end{theorem}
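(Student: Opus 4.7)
The plan is to derive \eqref{Levy-SPDE} for the candidate process $Y_t := \tau_{U_t}\xi$ on $[0,\eta)$ by applying the It\^o formula of Theorem \ref{random-initial} to the translate of the $\F_0$-measurable $\xi \in \Sc_{-p}$ by the $\R^d$-valued semimartingale $\{U_t\}$, and then regrouping the output according to \eqref{op-A-L}. Since $\{U_t\}$ is a priori defined only on $[0,\eta)$ with $\eta=\lim_m\theta_m$, I would apply Theorem \ref{random-initial} to each bounded stopped semimartingale $\{U^{\theta_m}_t\}$ and then let $m\to\infty$ to get equality in $\Sc_{-p-1}$ on $[0,\eta)$. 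The initial condition is matched by $U_0=\kappa=0$, giving $\tau_{U_0}\xi=\xi$. The pathwise observation enabling the whole identification is that whenever $U$ jumps at time $s$, the mark $x$ of $N$ driving the jump satisfies
\[
\Delta U_s=\bar F(U_{s-},x;\xi)=F(Y_{s-},x)\ (0<|x|<1),\qquad \Delta U_s=\bar G(U_{s-},x;\xi)=G(Y_{s-},x)\ (|x|\geq 1),
\]
so that $\tau_{U_s}\xi-\tau_{U_{s-}}\xi=(\tau_{\Delta U_s}-Id)Y_{s-}$ is exactly the jump integrand appearing on the right-hand side of \eqref{Levy-SPDE}.

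Substituting the SDE \eqref{fd-sde-sln} for $dU^i_s$ into the It\^o formula yields four groups of terms that recombine cleanly. The Brownian part $-\sum_{i,j}\int_0^t\partial_i Y_{s-}\,\bar\sigma_{ij}(U_{s-};\xi)\,dB^j_s$ collapses into $\int_0^t A(Y_{s-})\cdot dB_s$ via $\bar\sigma_{ij}(U_{s-};\xi)=\inpr{\sigma_{ij}}{Y_{s-}}$ and the definition of $A_j$ in \eqref{op-A-L}. The drift $-\sum_i\int_0^t\partial_i Y_{s-}\,\bar b_i(U_{s-};\xi)\,ds$ together with the continuous quadratic variation $\tfrac12\sum_{i,j}\int_0^t\partial^2_{ij}Y_{s-}\,(\bar\sigma\bar\sigma^t)_{ij}(U_{s-};\xi)\,ds$ assembles into $\int_0^t L(Y_{s-})\,ds$. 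For the jumps I would rewrite the pathwise jump sum of the It\^o formula as a Poisson integral against $N$ and split it over $\{0<|x|<1\}$ and $\{|x|\geq 1\}$. On the small-jump piece, after adding and subtracting the $\nu$-compensator, the correction $\sum_i F_i(Y_{s-},x)\partial_i Y_{s-}$ cancels exactly against the $\widetilde N$-integral coming from $-\sum_i\int_0^t\int_{(0<|x|<1)}\partial_i Y_{s-}\,\bar F_i(U_{s-},x;\xi)\,\widetilde N(dsdx)$, leaving the $\widetilde N$-integral of $(\tau_{F(Y_{s-},x)}-Id)Y_{s-}$; the leftover compensator $\int_0^t\int_{(0<|x|<1)}[(\tau_{F(Y_{s-},x)}-Id)Y_{s-}+\sum_i F_i(Y_{s-},x)\partial_i Y_{s-}]\,\nu(dx)\,ds$ is absorbed into $\int_0^t\widetilde L(Y_{s-})\,ds$ by \eqref{op-A-L}. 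The analogous rearrangement on $\{|x|\geq 1\}$ (no compensation needed, since $N([0,\theta_m]\times\{|x|\geq 1\})$ is a.s.\ finite) produces the $N$-integral in \eqref{Levy-SPDE}.

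The main obstacle is legitimising the splitting of the pathwise jump sum on $\{0<|x|<1\}$ into a $\widetilde N$-integral plus a $\nu$-compensator: the terms $(\tau_{F(Y_{s-},x)}-Id)Y_{s-}$ and $\sum_i F_i(Y_{s-},x)\partial_i Y_{s-}$ are not individually $\nu$-integrable near $x=0$, only their combination is. This is handled by a second-order Taylor expansion of $v\in[0,1]\mapsto\inpr{\tau_{vz}\psi}{h_m}$, along the lines of the first-order calculation displayed in the excerpt preceding the theorem, which shows that the combined integrand is $O(|F(Y_{s-},x)|^2)$ in the $\Sc_{-p-1}$ norm; Lemma \ref{f-bd} then gives $\nu$-integrability. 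A localisation along $\pi_n:=\inf\{t:\|Y_t\|_{-p}\geq n\}\wedge n$ (which stays finite on $[0,\theta_m)$ because $|U_t|\leq m$ there and Proposition \ref{tau-x-estmte}(a) bounds $\|\tau_{U_t}\xi\|_{-p}$) supplies the integrability of every stochastic integral in \eqref{Levy-SPDE} restricted to $[0,\theta_m\wedge\pi_n]$. Passing to the limits $n,m\to\infty$ yields \eqref{Levy-SPDE} in $\Sc_{-p-1}$ for every $0\leq t<\eta$, and setting $Y_t:=\delta$ for $t\geq\eta$ exhibits $(\{Y_t\},\eta)$ as a local strong solution.
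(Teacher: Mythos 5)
Your proposal is correct and follows essentially the same route as the paper: apply the It\^o formula of Theorem \ref{random-initial} to $\tau_{U_t}\xi$, identify $\Delta U_s$ with $\bar F$ or $\bar G$ evaluated at the Poisson mark, rewrite the jump sum as an $N$-integral, compensate the small jumps, and regroup the resulting terms into $A$, $\widetilde L$ and the jump integrals of \eqref{Levy-SPDE} using $\bar F(U_{s-},x;\xi)=F(Y_{s-},x)$ etc. Your extra care about the admissibility of the $\widetilde N$/$\nu$ splitting (via the Taylor-expansion estimates and the localisation $\pi_n$) is exactly the content the paper relegates to the discussion following \eqref{op-A-L}, so there is no substantive difference.
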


\begin{proof}
Note that $Y_{t-}=\tau_{U_{t-}}\xi, t < \eta$. By the It\^{o}
formula in Theorem \ref{random-initial}, a.s.
\begin{equation}\label{Ito-formula-random-initl-U}
\begin{split}
\tau_{U_t}\xi &=\xi - \sum_{i=1}^d \int_0^t
\partial_i\tau_{U_{s-}}\xi\,
dU^i_s + \frac{1}{2}\sum_{i,j=1}^d \int_0^t \partial_{ij}^2\tau_{U_{s-}}\xi\,
d[U^i,U^j]^c_s\\
&+\sum_{s \leq t}\left[\tau_{U_s}\xi - \tau_{U_{s-}}\xi +
\sum_{i=1}^d (\bigtriangleup U^i_s\,\partial_i\tau_{U_{s-}}\xi)\right], \, t < \eta.
\end{split}
\end{equation}

Observe that
\begin{equation}\label{exist-t4}
\bigtriangleup U^i_t =\bar F^i(U_{t-},\bigtriangleup
N_t;\xi)\indicator{(0 < |\bigtriangleup N_t| < 1)} +\bar G^i(U_{t-},\bigtriangleup
N_t;\xi)\indicator{(|\bigtriangleup N_t| \geq 1)}
\end{equation}
and hence
\begin{align*}
&\tau_{U_s}\xi - \tau_{U_{s-}}\xi
+\sum_{i=1}^d(\bigtriangleup
U^i_s\,\partial_i\tau_{U_{s-}}\xi)\\
&= \left( \tau_{\bigtriangleup U_s} -
Id\right)\tau_{U_{s-}}\xi
+\sum_{i=1}^d(\bigtriangleup
U^i_s\,\partial_i\tau_{U_{s-}}\xi)\\
&=\indicator{(0 < |\bigtriangleup N_s| < 1)} \left(\tau_{\bar F(U_{s-},\bigtriangleup
N_s;\xi)} -
Id +\sum_{i=1}^d\bar F^i(U_{s-},\bigtriangleup
N_s;\xi)\, \partial_i\right)\tau_{U_{s-}}\xi\\
&+ \indicator{(|\bigtriangleup N_s| \geq 1)}\left( \tau_{\bar G(U_{s-},\bigtriangleup
N_s;\xi)} -
Id +
\sum_{i=1}^d\bar G^i(U_{s-},\bigtriangleup
N_s;\xi)\, \partial_i\right)\tau_{U_{s-}}\xi.
\end{align*}
This observation yields a simplification of the fourth term of the right-hand side of \eqref{Ito-formula-random-initl-U}, viz.
\begin{align}\label{exist-t5}
\begin{split}
&\sum_{s \leq t}\left[\tau_{U_s}\xi - \tau_{U_{s-}}\xi +
\sum_{i=1}^d (\bigtriangleup U^i_s\,\partial_i\tau_{U_{s-}}\xi)\right]\\
&=\int_0^t\int_{(0<|x|<1)}\left(\tau_{\bar F(U_{s-},x;\xi)} -
Id +\sum_{i=1}^d\bar F^i(U_{s-},x;\xi)\, \partial_i\right)\tau_{U_{s-}}\xi\  N(dsdx)\\
&+\int_0^t\int_{(|x|>1)}\left( \tau_{\bar G(U_{s-},x;\xi)} -
Id +
\sum_{i=1}^d\bar G^i(U_{s-},x;\xi)\, \partial_i\right)\tau_{U_{s-}}\xi\ N(dsdx)\\
&=\int_0^t\int_{(0<|x|<1)}\left(\tau_{\bar F(U_{s-},x;\xi)} -
Id +\sum_{i=1}^d\bar F^i(U_{s-},x;\xi)\, \partial_i\right)\tau_{U_{s-}}\xi\ \widetilde N(dsdx)\\
&+\int_0^t\int_{(0<|x|<1)}\left(\tau_{\bar F(U_{s-},x;\xi)} -
Id +\sum_{i=1}^d\bar F^i(U_{s-},x;\xi)\, \partial_i\right)\tau_{U_{s-}}\xi\ \nu(dx)ds\\
&+\int_0^t\int_{(|x|>1)}\left( \tau_{\bar G(U_{s-},x;\xi)} -
Id \right)\tau_{U_{s-}}\xi\ N(dsdx)\\
&+\int_0^t\int_{(|x|>1)}\left(\sum_{i=1}^d\bar G^i(U_{s-},x;\xi)\, \partial_i\right)\tau_{U_{s-}}\xi\ N(dsdx).
\end{split}
\end{align}
Substituting \eqref{exist-t5} into \eqref{Ito-formula-random-initl-U} and simplifying the equality, we obtain \eqref{Levy-SPDE}. Here we have used the fact that the coefficients satisfy relations like $\bar F(U_{s-},x;\xi) = F(\tau_{U_{s-}}\xi,x) = F(Y_{s-},x)$ etc..
\end{proof}

\subsection{Uniqueness results via monotonicity arguments}\label{sec:3-2}
Existence of local strong solutions to SPDE \eqref{Levy-SPDE} follows from Theorem \ref{Levy-SPDE-existence}. Uniqueness of solutions to \eqref{Levy-SPDE} are the focus of this section. The proof of uniqueness is based on the `Monotonicity inequality', (see \cite[p. 29]{MR2479730}, \cite[p. 308]{MR1465436}, \cite[Section 3]{MR570795}]).

\begin{theorem}\label{uniq-via-monotonicity}
Let \ref{sigma-b}, \ref{F1}, \ref{F2}, \ref{F3}, \ref{loc-Lip} and \ref{G1} hold. In addition, assume that \ref{G2} holds. Then there exists a unique local strong solution to \eqref{Levy-SPDE}.
\end{theorem}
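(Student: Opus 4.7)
Existence of a local strong solution is immediate from Theorem \ref{Levy-SPDE-existence}, so the real work is pathwise uniqueness. Given two local strong solutions $(\{Y^i_t\},\eta^i)$, $i=1,2$, write $Z_t := Y^1_t - Y^2_t$ and introduce the localising stopping times
\[
\pi_n := \inf\{t\geq 0: \|Y^1_t\|_{-p} \vee \|Y^2_t\|_{-p} \geq n\}\wedge n \wedge \eta^1 \wedge \eta^2,
\]
so that both solutions remain in the bounded set $\K_n := \{\psi \in \Sc_{-p} : \|\psi\|_{-p} \leq n\}$ on $[0,\pi_n]$. Since $\nu(\{|x|\geq 1\}) < \infty$, the large jumps of $N$ form a locally finite set of times $\{T_k\}$, and a standard interlacing argument reduces the task to two statements on each interval $[T_k,T_{k+1})\cap[0,\pi_n)$: (a) if $Y^1_{T_k}=Y^2_{T_k}$ then $Y^1 \equiv Y^2$ on this interval, and (b) the two solutions have equal jumps at $T_{k+1}$. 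Assertion (b) follows from (a) since the jump sizes $(\tau_{G(Y^i_{T_{k+1}-},\Delta N_{T_{k+1}})}-Id)Y^i_{T_{k+1}-}$ depend only on the common left limit via \ref{G1}, with finite amplitude thanks to \ref{G2}.

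For (a), apply It\^o's formula in $\Sc_{-p-1}$ to the real valued semimartingale $\|Z_t\|_{-p-1}^2$ on $[T_k,T_{k+1})\cap[0,\pi_n)$. After the usual martingale localisation the Brownian and compensated Poisson contributions vanish in expectation, leaving an estimate of the form $\Exp\|Z_{t\wedge\pi_n}\|_{-p-1}^2 \leq \int_{T_k}^{t\wedge\pi_n} \Exp\bigl(T_1(s) + T_2(s)\bigr)\,ds$, where $T_1(s) := 2\inpr[-p-1]{Z_{s-}}{\widetilde L(Y^1_{s-}) - \widetilde L(Y^2_{s-})} + \|A(Y^1_{s-}) - A(Y^2_{s-})\|_{HS(-p-1)}^2$ collects the continuous contributions and $T_2(s) := \int_{(0<|x|<1)} \| (\tau_{F(Y^1_{s-},x)}-Id)Y^1_{s-} - (\tau_{F(Y^2_{s-},x)}-Id)Y^2_{s-}\|_{-p-1}^2\,\nu(dx)$ is the predictable quadratic variation coming from the small jumps.

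The bound on $T_1$ is the standard monotonicity argument. Using the splitting $A_j(Y^1)-A_j(Y^2) = -\sum_{i}\inpr{\sigma_{ij}}{Z}\partial_i Y^1 - \sum_{i}\inpr{\sigma_{ij}}{Y^2}\partial_i Z$ together with its analogue for $L(Y^1) - L(Y^2)$, the ``principal'' pieces containing $\partial Z$ have bounded random coefficients $\inpr{\sigma_{ij}}{Y^2_{s-}}, \inpr{b_i}{Y^2_{s-}}$ (bounded by $\beta n$ using \ref{sigma-b} and $\|Y^2_{s-}\|_{-p}\leq n$), so Theorem \ref{constant-monotonicity} applies pathwise at level $-p-1$ and produces a bound $C_n\|Z_{s-}\|_{-p-1}^2$. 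The cross terms involving $\inpr{\sigma_{ij}}{Z}$ and $\inpr{b_i}{Z}$ are controlled by $|\inpr{\sigma_{ij}}{Z}| \leq \beta\|Z\|_{-p-1}$ combined with uniform boundedness of $\partial_i Y^1_{s-}$ in $\Sc_{-p-\frac{1}{2}}$; this gives $\Exp T_1(s) \leq C_n \Exp\|Z_{s\wedge\pi_n}\|_{-p-1}^2$.

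The main obstacle is $T_2$. The strategy, following \cite{MR3331916}, is to insert the second order Taylor formula $\tau_z\psi - \psi = -\sum_i z_i\partial_i\psi + \int_0^1(1-v)\sum_{i,j}z_iz_j\partial^2_{ij}\tau_{vz}\psi\,dv$, applied pointwise to $z = F(Y^i_{s-},x)$ and $\psi = Y^i_{s-}$, and to exploit the cancellation of the first order terms against the compensator piece $\sum_i F^i(\rho,x)\partial_i\rho$ of $\widetilde L(\rho)$. What remains, for each fixed $x$, is a constant coefficient second order differential operator in the direction $F(Y^i_{s-},x)$, precisely of the type covered by Theorem \ref{constant-monotonicity} with $\sigma = F(Y^i_{s-},x)$; the constant there depends only on $|F|$, which integrates against $\nu(dx)$ on $(0<|x|<1)$ thanks to $\int_{(0<|x|<1)}|F(Y^i,x)|^2\nu(dx) \leq \alpha(\K_n)$ from Lemma \ref{f-bd}. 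This yields a principal bound $C_n\|Z_{s-}\|_{-p-1}^2$ for $T_2$; the residual cross terms involving $F(Y^1,x) - F(Y^2,x)$ are absorbed by the Lipschitz estimate \ref{F1} combined with the square integrability \ref{F2}. Substituting the $T_1$ and $T_2$ bounds yields $\Exp\|Z_{t\wedge\pi_n}\|_{-p-1}^2 \leq C_n \int_{T_k}^{t\wedge\pi_n} \Exp\|Z_{s\wedge\pi_n}\|_{-p-1}^2\,ds$, and Gronwall's lemma forces $\Exp\|Z_{t\wedge\pi_n}\|_{-p-1}^2 = 0$ on $[T_k,T_{k+1})\cap[0,\pi_n)$. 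Induction over $k$ and $n\to\infty$ conclude the proof.
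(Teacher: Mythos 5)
Your existence step and the overall Gronwall/interlacing architecture are fine, but the uniqueness argument has a genuine gap: the direct comparison of two solutions $Y^1,Y^2$ does not close at the norm level $\|\cdot\|_{-p-1}$ where the monotonicity inequality forces you to work. Concretely, in your bound for $T_1$ you assert $|\inpr{\sigma_{ij}}{Z}|\leq \beta\|Z\|_{-p-1}$; but \ref{sigma-b} only gives $\sigma_{ij}\in\Sc_p$, so the duality pairing satisfies $|\inpr{\sigma_{ij}}{Z}|\leq\|\sigma_{ij}\|_p\|Z\|_{-p}$, and $\|Z\|_{-p}\geq\|Z\|_{-p-1}$ with no reverse inequality. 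The same mismatch kills the ``residual cross terms'' in $T_2$: \ref{F1} controls $|F(Y^1,x)-F(Y^2,x)|$ by $C_x\|Z\|_{-p-\frac12}$, and the piece $(\tau_{F(Y^1,x)}-\tau_{F(Y^2,x)})Y^2$ is then bounded (via Proposition \ref{ext-tau}) by a multiple of $\|Z\|_{-p-\frac12}$, again a \emph{stronger} norm than $\|Z\|_{-p-1}$. So the right-hand side of your Gronwall inequality involves $\Exp\|Z_s\|_{-p}^2$ or $\Exp\|Z_s\|_{-p-\frac12}^2$, which cannot be dominated by $\Exp\|Z_s\|_{-p-1}^2$; and you cannot move the whole argument up to level $-p$ or $-p-\frac12$ because the second-order principal terms $\inpr[q]{Z}{\bar L Z}$ require the full one-derivative loss ($q=-p-1$) to even make sense for $Z\in\Sc_{-p}$. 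This is the standard loss-of-derivatives obstruction for quasilinear equations, and it is precisely why the paper does not compare two SPDE solutions directly.

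The paper's route avoids the obstruction by exploiting the translation-invariance structure: given \emph{one} solution $(\{Y_t\},\eta)$, it defines the finite-dimensional process $Z_t$ by \eqref{Z} and shows (Lemma \ref{unqu}) that $V_t:=Y_t-\tau_{Z_t}\xi$ satisfies the \emph{linear} equation \eqref{eqn-V}, whose coefficients $\bar A(s),\bar L(s),\tau_{F(Y_{s-},x)},\tau_{G(Y_{s-},x)}$ are frozen along the given $Y$ and hence are uniformly bounded operators on the stochastic interval $[0,\pi_n]$. For a linear equation the monotonicity inequality (including your second-order Taylor trick for the small jumps, which the paper does use in exactly the form you describe, via \eqref{spl-mono}) closes cleanly at level $-p-1$ and Gronwall gives $V\equiv 0$. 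Uniqueness of $Y=\tau_Z\xi$ is then inherited from pathwise uniqueness of the finite-dimensional SDE (Theorem \ref{nrm-sqre-rndm-inl-fnl}), where the Lipschitz hypotheses \ref{loc-Lip}, \ref{F1} live in the variable $z\in\R^d$ and there is no norm mismatch. To repair your proof you would need to either add regularity ($\sigma_{ij},b_i\in\Sc_{p+1}$ and a Lipschitz condition on $F$ in $\|\cdot\|_{-p-1}$), which changes the theorem, or adopt the paper's reduction. As a minor point, the interlacing over large-jump times is unnecessary here: under \ref{G2} the large jumps are handled directly as a bounded compensated term (Term 3 in \eqref{Exp-norm-Ito}); interlacing is reserved for Theorem \ref{complt-eqn}, where \ref{G2} is dropped.
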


To establish the uniqueness, we first show that any local strong solution to equation \eqref{Levy-SPDE} is of a specific form. Even though this result is only used in the proof of Theorem \ref{uniq-via-monotonicity}, we state it separately in order to keep our arguments transparent.

\begin{lemma}\label{unqu}
Let the hypotheses of Theorem \ref{uniq-via-monotonicity} hold. Let $(\{Y_t\}, \eta)$ be an $\hat\Sc_{-p}$ valued local strong solution of \eqref{Levy-SPDE}. Define
\begin{equation}\label{Z}
\begin{split}
Z_t :&=  \int_0^t \inpr{b}{Y_{s-}}\, ds + \int_0^t \inpr{\sigma}{Y_{s-}}\cdot dB_s +\int_0^t \int_{(0 < |x| < 1)} F(Y_{s-},x)\, \widetilde
N(dsdx)\\
&+ \int_0^t \int_{(|x| \geq  1)} G(Y_{s-},x) \,
N(dsdx),
\end{split}
\end{equation}
for $0\leq t < \eta$. Then a.s. $Y_t = \tau_{Z_t} \xi, t < \eta$.
\end{lemma}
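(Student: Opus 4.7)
The plan is to apply the It\^o formula of Theorem~\ref{random-initial} to $W_t := \tau_{Z_t}\xi$, show that $W$ solves the same integral equation as $Y$ does in a suitably linearised sense, and then close the argument by a monotonicity-based uniqueness estimate for the difference $V := Y - W$. Since $Z$ in \eqref{Z} is an $\R^d$ valued semimartingale with locally bounded coefficients on $[0,\eta)$ (by \ref{sigma-b}, Lemma~\ref{f-bd} and \ref{G2}), Theorem~\ref{random-initial} applies and yields
\begin{align*}
W_t = \xi - \sum_{i=1}^d \int_0^t \partial_i\tau_{Z_{s-}}\xi\, dZ^i_s + \frac{1}{2}\sum_{i,j=1}^d \int_0^t \partial^2_{ij}\tau_{Z_{s-}}\xi\, d[Z^i,Z^j]^c_s + \sum_{s\leq t}\left[\tau_{Z_s}\xi - \tau_{Z_{s-}}\xi + \sum_{i=1}^d \Delta Z^i_s\,\partial_i\tau_{Z_{s-}}\xi\right].
\end{align*}

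Substituting the dynamics of $Z$, so that $\Delta Z_s = F(Y_{s-},\Delta N_s)\indicator{(0<|\Delta N_s|<1)} + G(Y_{s-},\Delta N_s)\indicator{(|\Delta N_s|\geq 1)}$ and $d[Z^i,Z^j]^c_s = (\inpr{\sigma}{Y_{s-}}\inpr{\sigma}{Y_{s-}}^t)_{ij}\,ds$, rewriting the pure-jump sum as an $N$ integral and compensating the small jumps, the first-order terms $\partial_i\tau_{Z_{s-}}\xi\cdot F^i(Y_{s-},x)$ appearing in $-\sum_i\int\partial_i\tau_{Z_{s-}}\xi\,dZ^i_s$ cancel against the matching terms inside the jump correction. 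This is essentially the same algebraic manipulation carried out in the proof of Theorem~\ref{Levy-SPDE-existence}, only with $Z$ in place of $U$. After simplification, $W$ is seen to satisfy the SPDE \eqref{Levy-SPDE} but with all the translations and derivatives acting on $W_{s-}$ rather than on $Y_{s-}$, while the ``scalar'' arguments $\inpr{\sigma}{\cdot},\inpr{b}{\cdot},F(\cdot,x),G(\cdot,x)$ remain evaluated at $Y_{s-}$. Since $Y$ itself trivially solves this linearised equation by \eqref{Levy-SPDE}, the difference $V_t := Y_t - W_t$ satisfies a homogeneous \emph{linear} SPDE in $\Sc_{-p}$ with $V_0 = 0$, driven by the fixed semimartingale $Y$.

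To conclude $V\equiv 0$ on $[0,\eta)$, I would localise via $\pi_n := \inf\{t\geq 0 : \|Y_t\|_{-p} \vee \|W_t\|_{-p} \vee |Z_t| \geq n\} \wedge n \wedge \eta$, apply the $\Sc_{-p}$ It\^o formula to $\|V^{\pi_n}_t\|_{-p}^2$, and take expectation. The continuous martingale piece vanishes; the drift--diffusion piece is controlled by the constant-coefficient monotonicity inequality of Theorem~\ref{constant-monotonicity} applied to the uniformly bounded coefficients $\inpr{\sigma}{Y_{s-}}, \inpr{b}{Y_{s-}}$ (bounded by \ref{sigma-b} together with the localisation); the large-jump term is bounded by \ref{G2} together with the polynomial growth of $\tau_x$ from Proposition~\ref{tau-x-estmte}; and the small-jump term is handled via a second-order Taylor expansion of $v\in[0,1]\mapsto \inpr{\tau_{vF(Y_{s-},x)}V_{s-}}{h_m}$, reducing to another application of Theorem~\ref{constant-monotonicity} together with \ref{F2}--\ref{F3}, precisely in the spirit of the estimate sketched after \eqref{op-A-L}. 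These combine to give $\Exp\|V^{\pi_n}_t\|_{-p}^2 \leq C_n\int_0^t \Exp\|V^{\pi_n}_s\|_{-p}^2\,ds$, so Gronwall with $\pi_n\uparrow\eta$ yields $Y_t = \tau_{Z_t}\xi$ a.s. for $0\leq t<\eta$. The main obstacle is the small-jump estimate: the naive bound $\|(\tau_{F(Y_{s-},x)}-Id)V_{s-}\|_{-p-1}\lesssim |F(Y_{s-},x)|\|V_{s-}\|_{-p}$ (from boundedness of $\partial_i:\Sc_{-p}\to\Sc_{-p-1}$) is too weak for an $\Sc_{-p}$-level monotonicity framework, and it is the second-order Taylor trick above that converts the $\mathcal{L}^2(\nu)$ control from \ref{F2}--\ref{F3} into an $\Sc_{-p}$ bound of the correct order.
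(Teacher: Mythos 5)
Your proposal follows essentially the same route as the paper's proof: set $V_t = Y_t - \tau_{Z_t}\xi$, use the It\^o formula of Theorem~\ref{random-initial} to show $V$ solves a homogeneous linear SPDE with coefficients frozen along $Y_{s-}$, then apply the norm-squared It\^o formula, the constant-coefficient monotonicity inequality for the drift--diffusion part, assumption \ref{G2} for the large jumps, the second-order Taylor trick for the small jumps, and Gronwall. The only correction needed is that the norm-squared It\^o formula and the Gronwall estimate must be run in $\|\cdot\|_{-p-1}$ rather than $\|\cdot\|_{-p}$, since $\bar L(s)V_{s-}$ lives only in $\Sc_{-p-1}$ and the pairing $\inpr[-p]{V_{s-}}{\bar L(s)V_{s-}}$ is not defined; this still yields $V\equiv 0$ because $\|\cdot\|_{-p-1}$ separates points of $\Sc_{-p}$.
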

\begin{proof}
We follow the approach used in \cite[Lemma 3.6]{MR3063763}, but with initial condition $\xi$ random (see \cite[Theorem 3.14]{MR3687773}). Define $V_t := Y_t - \tau_{Z_t}\xi$. As done in Theorem \ref{Levy-SPDE-existence}, we simplify $\tau_{Z_t}\xi$ using the It\^o formula in Theorem \ref {random-initial}. Then using \eqref{Levy-SPDE}, we have a.s. for $0\leq t < \eta$ \begin{equation}\label{eqn-V}
\begin{split}
V_t &= \int_0^t
\bar A(s)V_{s-}\cdot dB_s + \int_0^t \bar L(s)V_{s-}\, ds\\
&+\int_0^t \int_{(0 < |x| < 1)} \left(
\tau_{F(Y_{s-},x)} -
Id + \sum_{i=1}^dF^i(Y_{s-},x)\, \partial_i\right) V_{s-}\,
\nu(dx)\,ds\\
&+\int_0^t \int_{(0 < |x| < 1)} \left(
\tau_{F(Y_{s-},x)} -
Id\right)V_{s-}\,\widetilde N(dsdx) + \int_0^t \int_{(|x| \geq  1)} \left(
\tau_{G(Y_{s-},x)} -
Id\right)V_{s-}\,
N(dsdx),
\end{split}
\end{equation}
where the bounded random linear operators $\bar A = (\bar A_1,\cdots, \bar A_d)$ with $\bar A_i:\Sc_{-p}\rightarrow \Sc_{-p-\frac{1}{2}}, i=1,\cdots, d$ and $\bar L:\Sc_{-p}\rightarrow \Sc_{-p-1}$ are defined as follows, for $\rho\in \Sc_{-p}$
\begin{align*}
\bar A_j(s,\omega) \rho &:= -\sum_{i=1}^d\langle\sigma(\omega),Y_{s-}(\omega)\rangle_{ij}\, \partial_i\rho,\  \ \ \ j=1,\cdots,d,\\
\bar L(s,\omega) \rho &:= \frac{1}{2}
\sum_{i,j=1}^d(\langle\sigma(\omega),Y_{s-}(\omega)\rangle\langle\sigma(\omega),Y_{s-}(\omega)\rangle^t)_{ij}\, \partial_{ij}^2 \rho -
\sum_{i=1}^d\langle b(\omega),Y_{s-}(\omega)\rangle_i\, \partial_i\rho.
\end{align*}
The following equation is obtained using It\^o formula for the norm $\|\cdot\|^2_{-p-1}$.
We have a.s. for $0\leq t < \eta$
\begin{equation}\label{V_t}
\begin{split}
&\|V_t\|^2_{-p-1} =  2\int_0^t \inpr[-p-1]{V_{s-}}{\bar A(s)V_{s-}}\cdot dB_s \\
&+ 2\int_0^t \inpr[-p-1]{V_{s-}}{\bar L(s)V_{s-}}\, ds +\int_0^t
\sum_{j=1}^d \|\bar A_j(s)V_{s-}\|^2_{-p-1}\, ds \\
&+\int_0^t \int_{(0 < |x| < 1)}  \left[\|\tau_{F(Y_{s-},x)}V_{s-}\|^2_{-p-1} - \|V_{s-}\|^2_{-p-1} + 2\sum_{i=1}^d\inpr[-p-1]{V_{s-}}{F^i(Y_{s-},x)\, \partial_i V_{s-}} \right]
\nu(dx)\,ds\\
&+\int_0^t \int_{(0 < |x| < 1)} \left[\|\tau_{F(Y_{s-},x)}V_{s-}\|^2_{-p-1} - \|V_{s-}\|^2_{-p-1} \right] \widetilde N(dsdx)\\
&+ \int_0^t \int_{(|x| \geq  1)} \left[\|\tau_{G(Y_{s-},x)}V_{s-}\|^2_{-p-1} - \|V_{s-}\|^2_{-p-1} \right]
N(dsdx).
\end{split}
\end{equation}

To establish the above equation we execute the following steps.
\begin{enumerate}[label=(\roman*)]
\item Recall that $h_m$ denotes the Hermite functions, where $m$ denotes multi-indices. Then from \eqref{eqn-V}, we obtain the equation satisfied by the real semimartingale $\{\inpr{V_t}{h_m}\}$.
\item Applying It\^{o} formula \cite[Theorem 4.4.7]{MR2512800} for the function $x \mapsto x^2$ we get an equation for the process $\{\inpr{V_t}{h_m}^2\}$.
\item We multiply the last equation by $(2|m|+d)^{-2(p+1)}$ and sum over $m$.
\end{enumerate}

Define $\pi_n:= \inf\{t: \max\{\|Y_t\|_{-p}, |Z_t|\} \geq n\}\wedge n \wedge \eta$. Note that $\|Y_{t-}^{\pi_n}\|_{-p-1} \leq \|Y_{t-}^{\pi_n}\|_{-p} \leq n$ and $|Z_{t-}^{\pi_n}| \leq n$. Hence the process $\{\|V_{t-}^{\pi_n}\|_{-p}\}$ is bounded and for $j=1,\cdots, d$, $\bar A_j(t):\Sc_{-p} \to \Sc_{-p-1}, t \leq \pi_n$ are bounded linear operators, bounded uniformly in $t$. This implies $\{\int_0^{t\wedge\pi_n} \inpr[-p-1]{V_{s-}}{\bar A(s)V_{s-}}\cdot dB_s\}$ is an $\mathcal L^2$ martingale.

Again, $\{\int_0^{t\wedge\pi_n} \int_{(0 < |x| < 1)} \left[\|\tau_{F(Y_{s-},x)}V_{s-}\|^2_{-p-1} - \|V_{s-}\|^2_{-p-1} \right] \widetilde N(dsdx)\}$ is an $\mathcal L^2$ martingale, since $\Exp \sup_{t \geq 0} \int_0^{t\wedge\pi_n} \int_{(0 < |x| < 1)} \left[\|\tau_{F(Y_{s-},x)}V_{s-}\|^2_{-p-1} - \|V_{s-}\|^2_{-p-1} \right]^2 \nu(dx) ds < \infty$. This integrability condition follows using a first order Taylor expansion of the function $f$ defined in \eqref{Taylor-required-f}. We omit the details here and provide the details (see \eqref{2nd-order-Taylor} and \eqref{term2} below) for the corresponding second order Taylor expansion used in estimating Term 2 of \eqref{Exp-norm-Ito}. However, \eqref{term2} requires \eqref{spl-mono}, while the proof of the integrability condition above involving the first order Taylor expansion uses \eqref{1st-order-monotonicity} instead. Since a.s. $V$ has (at most) countably many jumps, taking expectation on both sides of \eqref{V_t} we get
\begin{equation}\label{Exp-norm-Ito}
\begin{split}
&\Exp\|V_{t\wedge\pi_n}\|^2_{-p-1} =  \Exp\int_0^{t\wedge\pi_n} \left[ 2\inpr[-p-1]{V_{s}}{\bar L(s)V_{s}} +
\|\bar A(s)V_{s}\|^2_{HS(-p-1)}\right] ds \\
&+ \Exp\int_0^{t\wedge\pi_n} \int_{(0 < |x| < 1)}  \left[\|\tau_{F(Y_{s-},x)}V_{s}\|^2_{-p-1} - \|V_{s}\|^2_{-p-1} + 2\sum_{i=1}^d\inpr[-p-1]{V_{s}}{F^i(Y_{s-},x)\, \partial_i V_{s}} \right]
\nu(dx)\,ds\\
&+ \Exp \int_0^{t\wedge\pi_n} \int_{(|x| \geq  1)} \left[\|\tau_{G(Y_{s-},x)}V_{s}\|^2_{-p-1} - \|V_{s}\|^2_{-p-1} \right]
\nu(dx)\, ds\\
&= \text{Term 1} + \text{Term 2} + \text{Term 3}.
\end{split}
\end{equation}
We now prove certain estimates of these terms. Some positive constants appearing in these calculations may be written by $C$ and may change their values from line to line, but will depend on $n$ and $d$.

\underline{Estimate for Term 1:} By assumption \ref{sigma-b}, the coefficients in $\bar L(s), \bar A(s)$ are bounded for $s \leq \pi_n$. Hence applying the  Monotonicity inequality (Theorem \ref{constant-monotonicity}), we get
\begin{equation}\label{term1}
\Exp \int_0^{t\wedge\pi_n} \left[ 2\inpr[-p-1]{V_{s}}{\bar L(s)V_{s}} +
\|\bar A(s)V_{s}\|^2_{HS(-p-1)}\right] ds \leq C \Exp \int_0^{t\wedge\pi_n}  \|V_s\|_{-p-1}^2 \,ds,
\end{equation}
where $C$ is some positive constant.

\underline{Estimate for Term 2:} First we need a special case of the Monotonicity inequality (Theorem \ref{constant-monotonicity}), viz. we need an explicit form of the constant in this special case, to ensure certain integrability conditions. To prove this, we use an alternative proof of the Monotonicity inequality already given in \cite{MR3331916}.

By \cite[Theorem 2.5]{MR3331916}, for each $1 \leq i \leq d$, there exists
a bounded operator
$\mathbb{T}_i:\Sc_{-p-1}\to\Sc_{-p-1}$ such that the adjoint operator $\partial_i^\ast$ has the form $\partial_i^\ast = -\partial_i + \mathbb{T}_i$ on $\Sc$. Recall that $\partial_i:\Sc_{-p-\tfrac{1}{2}}\to\Sc_{-p-1}, 1 \leq i \leq d$ are bounded linear operators. Hence it is easy to see that $\partial_i^\ast = -\partial_i + \mathbb{T}_i$ on $\Sc_{-p-\tfrac{1}{2}}$. Moreover, by \cite[Lemma 2.6]{MR3331916}, the map $\inpr[-p-1]{\partial_i (\cdot)}{\mathbb{T}_j(\cdot)}:\Sc \times \Sc \rightarrow \R$ defined by
\[(\phi,\psi)\mapsto \inpr[-p-1]{\partial_i \phi}{\mathbb{T}_j\psi}, \; \forall \phi,\psi \in \Sc\]
extends to a bounded bilinear form on $\Sc_{-p-1}\times \Sc_{-p-1}$. Let $\alpha = (\alpha_1,\cdots,\alpha_d)^t \in \R^d$ and $\phi \in \Sc_{-p}$ be chosen arbitrarily. Then there exists a positive constant $R$, not depending on $\alpha$ and $\phi$, such that
\begin{equation}\label{spl-mono}
\begin{split}
\sum_{i,j=1}^d \alpha_i \alpha_j  \left[\inpr[-p-1]{ \partial_i\phi}{ \partial_j\phi} + \inpr[-p-1]{\phi}{\partial^2_{ij}\phi}\right]&=\sum_{i,j=1}^d \alpha_i \alpha_j \inpr[-p-1]{\mathbb{T}_i \phi}{\partial_j \phi}\\
&\leq R \|\phi\|_{-p-1}^2 \left(\sum_{i=1}^d |\alpha_i|\right)^2\\
&\leq dR \|\phi\|_{-p-1}^2 |\alpha|^2.
\end{split}
\end{equation}

To estimate Term 2, we use a second order Taylor expansion described below. Observe that
\begin{equation}\label{2nd-order-Taylor}
\begin{split}
&\|\tau_{F(Y_{s-},x)}V_{s}\|^2_{-p-1} - \|V_{s}\|^2_{-p-1} + 2\sum_{i=1}^d\inpr[-p-1]{V_{s}}{F^i(Y_{s-},x)\, \partial_i V_{s}} \\
&= \sum_{m\in\mathbb{Z}^d_+} (2|m|+d)^{-2(p+1)} \left[\inpr{\tau_{F(Y_{s-},x)}V_{s}}{h_m}^2 - \inpr{V_{s}}{h_m}^2 + 2 \sum_{i=1}^d \inpr{V_{s}}{h_m}\inpr{F^i(Y_{s-},x)\, \partial_i V_{s}}{h_m} \right] \\
&= \sum_{m\in\mathbb{Z}^d_+} (2|m|+d)^{-2(p+1)} \left[f(1;F(Y_{s-},x),V_s,m) - f(0;F(Y_{s-},x),V_s,m) - f^\prime(0;F(Y_{s-},x),V_s,m)\right]\\
&= \sum_{m\in\mathbb{Z}^d_+} (2|m|+d)^{-2(p+1)} \int_0^1 \int_0^r f^{\prime\prime}(v;F(Y_{s-},x),V_s,m)\,dv\, dr,
\end{split}
\end{equation}
where the function $f$ is given by
\begin{equation}\label{Taylor-required-f}
f(v;z,\psi,m) := \inpr{\tau_{vz}\psi}{h_m}^2,\, v\in [0,1],
\end{equation}
for all fixed $z\in \R^d, \psi \in \Sc_{-p}, m \in \mathbb{Z}^d_+$. Note that
\[f^\prime(v;z,\psi,m) = -2\sum_{i=1}^d\inpr{\tau_{vz}\psi}{h_m}\inpr{z_i\partial_i\tau_{vz}\psi}{h_m},\]
and
\[f^{\prime\prime}(v;z,\psi,m) = 2\sum_{i,j} z_i z_j \left[\inpr{\partial_i\tau_{vz}\psi}{h_m} \inpr{\partial_j\tau_{vz}\psi}{h_m} + \inpr{\tau_{vz}\psi}{h_m}\inpr{\partial_{ij}^2\tau_{vz}\psi}{h_m}\right].\]
Now, using \eqref{spl-mono} and Proposition \ref{tau-x-estmte}$(a)$, we have
\begin{align*}
&\|\tau_{F(Y_{s-},x)}V_{s}\|^2_{-p-1} - \|V_{s}\|^2_{-p-1} + 2\sum_{i=1}^d\inpr[-p-1]{V_{s}}{F^i(Y_{s-},x)\, \partial_i V_{s}} \\
&= 2 \int_0^1 \int_0^r
\sum_{i,j} F^i(Y_{s-},x) F^j(Y_{s-},x)\\ &\qquad\qquad \times \left[\inpr[-p-1]{\partial_i\tau_{vF(Y_{s-},x)}V_s}{\partial_j \tau_{vF(Y_{s-},x)}V_s}
+\inpr[-p-1]{\tau_{vF(Y_{s-},x)}V_s}{\partial^2_{ij} \tau_{vF(Y_{s-},x)}V_s} \right]dv\, dr\\
&\leq C |F(Y_{s-},x)|^2 \int_0^1 \int_0^r \|\tau_{vF(Y_{s-},x)}V_s\|_{-p-1}^2 \, dv\, dr\\
&\leq C |F(Y_{s-},x)|^2 \|V_s\|_{-p-1}^2 \int_0^1 \int_0^r (P(|vF(Y_{s-},x)|))^2 \, dv\, dr,
\end{align*}
where $P$ is some real polynomial of degree $2(\lfloor p+1 \rfloor +1)$. Now $\|Y_{t-}^{\pi_n}\|_{-p} \leq n$ and hence, by Lemma \ref{f-bd}(i), $P(|vF(Y_{s-}^{\pi_n},x)|))$ is also bounded for all $s \leq \pi_n$ and $v \in [0,1]$. Then
\begin{equation}\label{term2}
\begin{split}
&\Exp\int_0^{t\wedge\pi_n} \int_{(0 < |x| < 1)} \left[\|\tau_{F(Y_{s-},x)}V_{s}\|^2_{-p-1} - \|V_{s}\|^2_{-p-1} + 2\sum_{i=1}^d\inpr[-p-1]{V_{s}}{F^i(Y_{s-},x)\, \partial_i V_{s}} \right]
\nu(dx)\,ds\\
&\leq C \Exp \int_0^{t\wedge\pi_n} \int_{(0 < |x| < 1)}  |F(Y_{s-},x)|^2 \|V_s\|_{-p-1}^2 \int_0^1 \int_0^r (P(|vF(Y_{s-},x)|))^2 \, dv\, dr\,
\nu(dx)\,ds\\
&\leq C \Exp \int_0^{t\wedge\pi_n} \int_{(0 < |x| < 1)}  |F(Y_{s-},x)|^2 \|V_s\|_{-p-1}^2 \,
\nu(dx)\,ds\\
&\leq C \Exp \int_0^{t\wedge\pi_n}  \|V_s\|_{-p-1}^2 \,ds.
\end{split}
\end{equation}
In the last step above, we have used Lemma \ref{f-bd}(ii).

\underline{Estimate for Term 3:} Since $\{Y_{t-}^{\pi_n}\}$ is bounded in $\Sc_{-p}$, using \ref{G2} we get a bound for $G(Y_{s-}, x)$ when $x \in \overline{\oo(0,1)}, s \leq \pi_n$. Applying Proposition \ref{tau-x-estmte}, we have
\begin{equation}\label{term3}
\Exp \int_0^{t\wedge\pi_n} \int_{(|x| \geq  1)} \left[\|\tau_{G(Y_{s-},x)}V_{s}\|^2_{-p-1} - \|V_{s}\|^2_{-p-1} \right]
\nu(dx)\, ds
\leq C \Exp \int_0^{t\wedge\pi_n}  \|V_s\|_{-p-1}^2 \,ds,
\end{equation}
where $C$ is some positive constant.

From \eqref{Exp-norm-Ito}, \eqref{term1}, \eqref{term2} and \eqref{term3}, we get
\begin{equation}\label{gronwl-V}
\Exp\|V_t^{\pi_n}\|^2_{-p-1}\leq C \Exp \int_0^{t\wedge\pi_n}  \|V_s\|_{-p-1}^2 \,ds \leq C \Exp \int_0^t  \|V_s^{\pi_n}\|_{-p-1}^2 \,ds.
\end{equation}

By Gronwall's inequality, we have a.s. $V_t^{\pi_n} = 0, \forall t$. Hence, a.s. $V_t = 0, t < \eta$ and $Y_t = \tau_{Z_t}\xi, t < \eta$, where $\{Z_t\}$ is given by \eqref{Z}.
\end{proof}

\begin{proof}[Proof of Theorem \ref{uniq-via-monotonicity}]
Existence of local strong solutions to SPDE \eqref{Levy-SPDE} follows from Theorem \ref{Levy-SPDE-existence}. The uniqueness argument follows as in \cite{MR3063763}. Given any local strong solution $(\{Y_t\}, \eta)$, by Lemma \ref{unqu}, we have $Y_t = \tau_{Z_t}\xi$ where $\{Z_t\}$ is given by \eqref{Z}. Hence, \eqref{Z} becomes
\begin{align*}
Z_t &=  \int_0^t \bar b(Z_{s-};\xi)\, ds + \int_0^t \bar\sigma(Z_{s-};\xi)\cdot dB_s + \int_0^t \int_{(0 < |x| < 1)} \bar F(Z_{s-},x;\xi)\, \widetilde
N(dsdx)\\
&+ \int_0^t \int_{(|x| \geq  1)} \bar G(Z_{s-},x;\xi) \,
N(dsdx).
\end{align*}
By Theorem \ref{nrm-sqre-rndm-inl-fnl}, $(\{Z_t\}, \eta)$ is pathwise unique. Since $Y_t = \tau_{Z_t}\xi$, $(\{Y_t\}, \eta)$ is also pathwise unique.
\end{proof}

\begin{remark}\label{term2-via-mon}
The estimate of Term 2 in Lemma \ref{unqu} follows easily in simple situations. For example, let $F$ be bounded and the measure $\nu$ be finite. Using the boundedness of the translation operator (Proposition \ref{tau-x-estmte}), the term $\Exp\int_0^{t\wedge\pi_n} \int_{(0 < |x| < 1)}  \left[\|\tau_{F(Y_{s-},x)}V_{s}\|^2_{-p-1} - \|V_{s}\|^2_{-p-1} \right]
\nu(dx) ds$ can be estimated. To handle the remaining part, observe that $\inpr[-p-1]{\phi}{\partial_i \phi} = - \inpr[-p-1]{\partial_i \phi}{\phi} + \inpr[-p-1]{\mathbb{T}_i \phi}{\phi}$ for all $\phi \in \Sc_{-p}$ with $\mathbb{T}_i$ as described in the proof of Lemma \ref{unqu}. Then
\begin{equation}\label{1st-order-monotonicity}
2 \inpr[-p-1]{\phi}{\partial_i \phi} = \inpr[-p-1]{\mathbb{T}_i \phi}{\phi} \forall \phi \in \Sc_{-p},
\end{equation}
and the relevant term can be dominated by a constant multiple of $\|\phi\|_{-p-1}^2$, which in our case gives a bound involving $\|V_s\|_{-p-1}^2$.
\end{remark}

\begin{example}
Examples of the SPDEs we have considered are as follows.
\begin{enumerate}
\item Consider $F\equiv G \equiv 0$ in \eqref{Levy-SPDE}, with deterministic initial condition $\xi = \phi\in\Sc_{-p}$. Let $\sigma, b\in\Sc_p$ be deterministic such that $\langle\sigma,\tau_z\phi\rangle$ and $\langle b,\tau_z\phi\rangle$ are locally Lipschitz in $z$. Define $\bar\sigma(\omega,z;y):=\langle\sigma,\tau_z\phi\rangle$ and $\bar b(\omega,z;y):=\langle b,\tau_z\phi\rangle$ for $y\in\Sc_{-p}, \omega \in \Omega$. Then \ref{loc-Lip} holds. Applying Theorem \ref{nrm-sqre-rndm-inl-fnl}, existence and uniqueness of finite dimensional SDEs follow  for $\kappa = 0$. By Theorem  \ref{Levy-SPDE-existence} and Theorem \ref{uniq-via-monotonicity}, existence and uniqueness of the corresponding SPDE are established. Our results therefore implies Theorem 3.4 of \cite{MR3063763}.
\item Continue with $F\equiv G \equiv 0$ and $\kappa = 0$ as in the previous example, but consider $\xi$ with $\Exp \|\xi\|_{-p}^2 < \infty$. Then our results imply the results on existence and uniqueness of solutions studied in \cite[Section 3]{MR3687773}.
\item We consider $\sigma \equiv b \equiv G\equiv 0$. Let $F:\Sc_{-p}\times\oo(0,1)\to\R^d$, defined by $F(y,x):=x$. Then we have the existence and uniqueness of the following SPDE
\[
Y_t=\xi+\int_0^t\int_{(0<|x|<1)}\left(\tau_x-Id+\sum_{i=1}^dx_i\partial_i\right)Y_{s-}\nu(dx)ds
+\int_0^t\int_{(0<|x|<1)}\left(\tau_x-Id\right)Y_{s-}\,\widetilde N(dsdx).\]
\end{enumerate}

\end{example}

\subsection{Uniqueness via interlacing}\label{sec:3-3}
Using results of the previous subsection, the existence and uniqueness of local strong solutions of the reduced equation corresponding to \eqref{Levy-SPDE} follows, which is the case when $G \equiv 0$. In this subsection, we use the result for the reduced equation and use an interlacing argument to attach large jumps to obtain existence and uniqueness of local strong solutions of SPDE \eqref{Levy-SPDE}. This approach allows us to drop the assumption \ref{G2} which was used in Theorem \ref{uniq-via-monotonicity}.

\begin{theorem}\label{complt-eqn}
Let \ref{sigma-b}, \ref{F1}, \ref{F2}, \ref{F3}, \ref{loc-Lip} and \ref{G1} hold. Then we have the existence and uniqueness of the local strong solutions of \eqref{Levy-SPDE}.
\end{theorem}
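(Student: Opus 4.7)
The plan is to use the interlacing technique for Lévy-driven integrals at the SPDE level: since $\nu$ restricted to $\{|x|\geq 1\}$ is finite, the large-jump times of the Poisson random measure $N$ form a sequence of $(\F_t)$ stopping times $0 = T_0 < T_1 < T_2 < \cdots$ with $T_k\to\infty$ a.s., and on each open interval $(T_k, T_{k+1})$ the $N$-integral in \eqref{Levy-SPDE} is constant in time, so the SPDE reduces to the equation with $G\equiv 0$. For the reduced equation the hypothesis \ref{G2} is vacuous, and so Theorem~\ref{uniq-via-monotonicity} is available as a black box between consecutive large jumps.

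Existence is essentially free: Theorem~\ref{Levy-SPDE-existence} already established it under only \ref{G1}, by taking $Y_t = \tau_{U_t}\xi$ where $(\{U_t\},\eta)$ is the unique local strong solution of the full finite-dimensional SDE \eqref{fd-sde-sln} with $\kappa=0$ supplied by Theorem~\ref{nrm-sqre-rndm-inl-fnl}. The new content here is therefore local strong uniqueness without the boundedness hypothesis \ref{G2}, which I would prove by induction on $k$. Given two local strong solutions $(\{Y_t^1\},\eta^1)$ and $(\{Y_t^2\},\eta^2)$ of \eqref{Levy-SPDE}, suppose inductively that $Y^1=Y^2$ on $[0,T_k\wedge\eta^1\wedge\eta^2)$. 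Passing to left-limits gives $Y_{T_k-}^1 = Y_{T_k-}^2$, and the jump formula $Y_{T_k}^i = \tau_{G(Y_{T_k-}^i,\bigtriangleup N_{T_k})} Y_{T_k-}^i$ (valid on the event $\{T_k < \eta^i\}$ by an inspection of \eqref{Levy-SPDE}) forces $Y_{T_k}^1 = Y_{T_k}^2$ in $\Sc_{-p}$, where we have used Proposition~\ref{tau-x-estmte}$(a)$ to ensure that $Y_{T_k}^i$ still lies in $\Sc_{-p}$. On the interval $[T_k, T_{k+1}\wedge\eta^1\wedge\eta^2)$, each $Y^i$ satisfies, after a time shift by $T_k$, the reduced SPDE driven by the shifted Brownian motion $B^{(k)}_t := B_{T_k+t}-B_{T_k}$ and the shifted small-jump compensated Poisson random measure on the shifted filtration $(\F_{T_k+t})_{t\geq 0}$, starting from the common $\F_{T_k}$-measurable initial value $Y_{T_k}^i$. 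Applying Theorem~\ref{uniq-via-monotonicity} to the reduced equation in this shifted setup yields $Y^1 = Y^2$ on this interval, closing the induction. Since $T_k\to\infty$ a.s., this produces $Y^1 = Y^2$ on $[0,\eta^1\wedge\eta^2)$ as required.

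The main technical point, and the only non-routine step, is the legitimacy of applying Theorem~\ref{uniq-via-monotonicity} on a shifted filtration with an $\F_{T_k}$-measurable (rather than $\F_0$-measurable) initial condition. This is where one invokes the strong Markov property of the Lévy noise: conditionally on $\F_{T_k}$, the processes $B^{(k)}$ and $N^{(k)}$ are again an independent Brownian motion and Poisson random measure with the same Lévy measure, while the $\F_0$-measurable coefficients $\sigma_{ij}, b_i$ in \ref{sigma-b} are automatically $\F_{T_k}$-measurable, and the assumptions \ref{F1}--\ref{F3}, \ref{loc-Lip}, \ref{G1} are purely pathwise and thus unaffected by a time shift. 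The pathwise monotonicity computation driving Lemma~\ref{unqu} (in particular the Gronwall step \eqref{gronwl-V}) is then reproduced verbatim on $[T_k, T_{k+1}\wedge \eta^1\wedge\eta^2)$. This shift-invariance check is standard in the Lévy SDE/SPDE literature but should be stated explicitly since Theorem~\ref{uniq-via-monotonicity} and Lemma~\ref{unqu} as presented in Section~\ref{sec:3-2} are formulated only for $\F_0$-measurable initial data.
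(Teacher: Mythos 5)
Your proposal is correct and rests on the same interlacing idea over the large-jump times as the paper, but the induction is organized differently, and the difference is worth spelling out. The paper never restarts the equation at a large jump: given a solution $(\{Y_t\},\eta)$ it builds, piece by piece but always in the \emph{original} filtration and time axis, the $\R^d$-valued process $\{Z_t\}$ of \eqref{ZviaY} (inserting the increments $Z_{\pi_k}=Z_{\pi_k-}+\bar G(Z_{\pi_k-},\triangle P_{\pi_k};\xi)$ at the jump times), and proves $Y_t=\tau_{Z_t}\xi$ on each interval by running the Gronwall argument of Lemma \ref{unqu} on $V_t=Y_t-\tau_{Z_t}\xi$; the large jumps of $Y$ and of $\tau_{Z}\xi$ cancel in $V$, so Term 3 of \eqref{Exp-norm-Ito} never appears and \ref{G2} is never invoked. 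Uniqueness then follows in one stroke from pathwise uniqueness of the full finite-dimensional SDE \eqref{ZviaSDE} via Theorem \ref{nrm-sqre-rndm-inl-fnl}. You instead compare two SPDE solutions directly and apply Theorem \ref{uniq-via-monotonicity} for the reduced equation as a black box on the shifted filtration $(\F_{T_k+t})_{t\geq 0}$ with the $\F_{T_k}$-measurable initial condition $Y_{T_k}$. That works, but it genuinely requires the shift-invariance check you flag at the end: Theorem \ref{uniq-via-monotonicity}, Lemma \ref{unqu} and the finite-dimensional input Theorem \ref{nrm-sqre-rndm-inl-fnl} are all stated for $\F_0$-measurable data independent of the noise, so one must verify (via the strong Markov property of the L\'evy noise) that they survive replacing $\F_0$ by $\F_{T_k}$. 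The paper's bookkeeping buys freedom from that verification at the cost of re-examining the Gronwall computation on each interval; yours is more modular but needs the restated theorem. In substance --- large jumps handled by hand, small jumps by the monotonicity inequality --- the two arguments coincide.
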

\begin{proof}
Existence of local strong solutions to SPDE \eqref{Levy-SPDE} follows from Theorem \ref{Levy-SPDE-existence}. We use the interlacing procedure described in \cite[Example 1.3.13, pp. 50-51]{MR2512800} to establish the uniqueness.

Let $\{\pi_n\}_{n\in\N}$ be the arrival times for the jumps of the compound Poisson process $\{P_t\}_{t\geq0}$, where each $P_t=\int_{(|x|\geq1)}xN(t,dx)$. Let $(\{Y_t\}, \eta)$ be a local strong solution of \eqref{Levy-SPDE}. Since, a.s. $\pi_n\uparrow\infty$, the stochastic interval $[0,\eta)$ can be decomposed as a disjoint union $\bigcup_{n=0}^\infty [\pi_n\wedge\eta, \pi_{n+1}\wedge\eta)$, where $\pi_0 = 0$. We now construct an $\R^d$ valued adapted rcll process $\{Z_t\}$ such that the following equalities hold; a.s.
\begin{equation}\label{ZviaY}
\begin{split}
Z_t &=  \int_0^t \inpr{b}{Y_{s-}}\, ds + \int_0^t \inpr{\sigma}{Y_{s-}}\cdot dB_s +\int_0^t \int_{(0 < |x| < 1)} F(Y_{s-},x)\, \widetilde
N(dsdx)\\
&+ \int_0^t \int_{(|x| \geq  1)} G(Y_{s-},x) \,
N(dsdx), \ t < \eta,
\end{split}
\end{equation}

\begin{equation}\label{YviaZ}
Y_t = \tau_{Z_t} \xi, \ t < \eta,
\end{equation}

\begin{equation}\label{ZviaSDE}
\begin{split}
Z_t &=  \int_0^t \bar b(Z_{s-};\xi)\, ds + \int_0^t \bar\sigma(Z_{s-};\xi)\cdot dB_s + \int_0^t \int_{(0 < |x| < 1)} \bar F(Z_{s-},x;\xi)\, \widetilde
N(dsdx)\\
&+ \int_0^t \int_{(|x| \geq  1)} \bar G(Z_{s-},x;\xi) \,
N(dsdx), \ t < \eta.
\end{split}
\end{equation}
As pointed out in the proof of Theorem \ref{uniq-via-monotonicity}, \eqref{ZviaSDE} follows from \eqref{ZviaY} and \eqref{YviaZ}. We verify the claimed equalities on successive time intervals.

Comparing $\pi_n$'s and $\eta$, two cases arise viz. either $\pi_n \leq \eta < \pi_{n+1}$ for some $n \geq 0$ or $\pi_n < \eta, \forall n$. We consider the first case. The proof for the second case is similar.

If $n = 0$, i.e. $\pi_0 \leq \eta < \pi_1$, define $\{Z_t\}$ by the right hand side of \eqref{ZviaY}. Since there is no large jump, the equality in \eqref{YviaZ} follows as in Lemma \ref{unqu} and \eqref{ZviaSDE} also follows.

Now assume $n \geq 1$, i.e. $\pi_0 < \pi_1 < \cdots < \pi_n \leq \eta < \pi_{n+1}$.

On $[0,\pi_1)$, define $\{Z_t\}$ by the right hand side of \eqref{ZviaY}. Since there is no large jump, the equality in \eqref{YviaZ} follows as in Lemma \ref{unqu} and \eqref{ZviaSDE} also follows.

At $t = \pi_1$, define $Z_t := Z_{t-} + \bar G( Z_{t-},\triangle P_t;\xi)$. Then the equality in \eqref{ZviaSDE} holds true. Note that $Y_{t-} = \tau_{Z_{t-}} \xi$ on $(0, \pi_1]$. By It\^{o} formula in Theorem \ref{random-initial} the equality in \eqref{YviaZ} follows. Consequently, equality in \eqref{ZviaY} follows.

On $(\pi_1, \pi_2)$, define $\{Z_t\}$ by the right hand side of \eqref{ZviaY}.  Observe that there is no contribution of the large jump at $\pi_1$ in the difference $Y_t - \tau_{Z_t}\xi$. Hence, arguing as in Lemma \ref{unqu}, the equality in \eqref{YviaZ} follows for the time interval $(\pi_1, \pi_2)$. Equality in \eqref{ZviaSDE} also follows for the same time interval.

At $t = \pi_2$, define $Z_t := Z_{t-} + \bar G( Z_{t-},\triangle P_t;\xi)$. We verify the equalities at $t = \pi_2$ as in the case $t = \pi_1$.

Continuing this way, we construct $\{Z_t\}$. Since \eqref{ZviaSDE} holds, the uniqueness of $\{Z_t\}$ follows from Theorem \ref{nrm-sqre-rndm-inl-fnl}. Since $Y_t = \tau_{Z_t}\xi$, $\{Y_t\}$ is also unique.

This completes the proof.
\end{proof}

\textbf{Acknowledgement:} The first author would like to acknowledge the fact that he was supported by the NBHM (National Board for Higher Mathematics, under Department of Atomic Energy, Government of India) Post Doctoral Fellowship. The third author would like to acknowledge the fact that he was partially supported by the ISF-UGC research grant.

\bibliographystyle{plain}

\end{document}